\newtheorem{thm}{Theorem}[section]
\newtheorem{cor}[thm]{Corollary}
\newtheorem{lem}[thm]{Lemma}
\newtheorem{prop}[thm]{Proposition}
\theoremstyle{definition}
\numberwithin{equation}{section} 
\numberwithin{table}{section}
\newenvironment{customthm}[1]{\innercustomthm}{\endinnercustomthm}
\newcommand{\nc}{\newcommand}
\newcommand{\rc}{\renewcommand}
\rc{\t}{\text}
\nc{\tb}{\textbf}
\nc{\mb}{\mathbb}
\nc{\tc}{\textcolor}
\nc{\mf}{\mathfrak}
\nc{\mc}{\mathcal}
\nc\on{\operatorname}
\rc{\AA}{\mathbb{A}}
\nc{\CC}{\mathbb{C}}
\nc{\EE}{\mathbb{E}}
\nc{\FF}{\mathbb{F}}
\nc{\II}{\mathbb{I}}
\nc{\PP}{\mathbb{P}}
\nc{\QQ}{\mathbb{Q}}
\nc{\RR}{\mathbb{R}}
\nc{\ZZ}{\mathbb{Z}}
\rc{\SS}{\mathbb{S}}
\rc{\d}{\on{d}}
\nc{\Law}{\on{Law}}
\nc{\Prob}{\mathbb{P}}
\nc{\Unif}{\mathbf{U}}
\nc{\Spec}{\on{Spec}}
\nc{\Proj}{\on{Proj}}
\nc{\Circle}{{\mathbb{S}^1}}
\nc{\sphere}{\mathbb{S}}
\nc{\Cech}{\text{\v{C}}}
\nc{\rr}{\tilde{r}}
\nc{\explainbox}[2]{
\vspace{3mm}
\noindent
\fbox{
    \parbox{\textwidth-6mm}
        {\vspace{1mm}
            \textbf{\underline{#1}} {#2} 
            \vspace{1mm}}
    }\vspace{3mm}}
\begin{document}

\title{Strange Random Topology of the Circle}
\author[U. Lim]{Uzu Lim}
\email{lims@maths.ox.ac.uk}
\address{Mathematical Institute, University of Oxford, Radcliffe Observatory, Andrew Wiles Building, Woodstock Rd, Oxford OX2 6GG}
\maketitle
\begin{abstract}
    We characterise high-dimensional topology arising from a random Cech complex constructed on the circle. We explicitly compute expected Euler characteristic curve, where we observe plateaus and limiting spikes. The plateaus correspond to odd-dimensional spheres, and the spikes correspond to bouquets of even-dimensional spheres. In particular, the bouquets have arbitrarily large Betti number as the sample size grows larger. By departing from the conventional practice of scaling down filtration radii as the sample size grows large, our findings indicate that the full breadth of filtration radii leads to interesting systematic behaviour that cannot be regarded as "topological noise".
\end{abstract}

\section{Introduction}

A conventional wisdom in topological data analysis says the following: if we construct a simplicial complex from a random sample drawn from a manifold, then the topology of the simplicial complex approximates the topology of the manifold. Indeed this is true if we scale down the connectivity radius smaller as the sample size grows larger, but what happens when the connectivity radius stays the same?

We study the strange random topology of the circle and characterise its high-dimensional topology. Our main tool is expected Euler characteristics. We find intervals of radii in which the random Cech complex constructed from the circle is homotopy equivalent to bouquets of spheres, with positive probabilities. Here, a bouquet of spheres is the wedge sum $\vee^a \sphere^b$. It was known that only $a=1$ is allowed if $b$ is odd, and all $a\ge 1$ are allowed if $b$ is even \cite{ncca}. The single odd sphere $\sphere^{2b+1}$ appears with high probability over long intervals of filtration radii. Bouquets of even sphere $\vee^a \sphere^{2b}$ appears with a smaller but positive probability over shrinking intervals of radii, with arbitrarily large $a$. Let's now describe the setup.

\vspace{3mm}

\explainbox{Setup.}{Define the circle $\sphere^1$ as the quotient space $\sphere^1 = [0,1]/\sim$, as the interval of length 1, glued along endpoints: $0 \sim 1$. A bouquet of spheres $\vee^a \sphere^k$ is defined as the wedge sum of $a$ copies of $\sphere^k$.\footnote{We take the convention that for a topological space $K$, we define the 0-th wedge sum $\vee^0 K = *$, the singleton point set, and the 1st wedge sum $\vee^1 K = K$ itself.} 

For a positive integer $n$, let $\mathbf X_n$ be the i.i.d.\footnote{Independently and identically distributed} sample of size $n$, drawn uniformly from $\sphere^1$. The Cech complex of filtration radius $\le r$ is denoted by $\Cech(\mathbf X_n, r)$. In doing this construction, we always use the intrinsic topology of the circle, i.e. \textit{the Cech complex is a nerve complex constructed from arcs}. 

We denote the expected Euler characteristic and expected Betti number as follows:
\[ \bar \chi(n, r) = \EE_{\mathbf X_n} \bigg[ \chi(\Cech(\mathbf X_n, r)) \bigg], \quad \bar b_k(n, r) = \EE_{\mathbf X_n} \bigg[ \dim H_k (\Cech(\mathbf X_n, r)) \bigg] \]
}

\begin{figure}\label{fig1}
    \includegraphics[scale = 0.8]{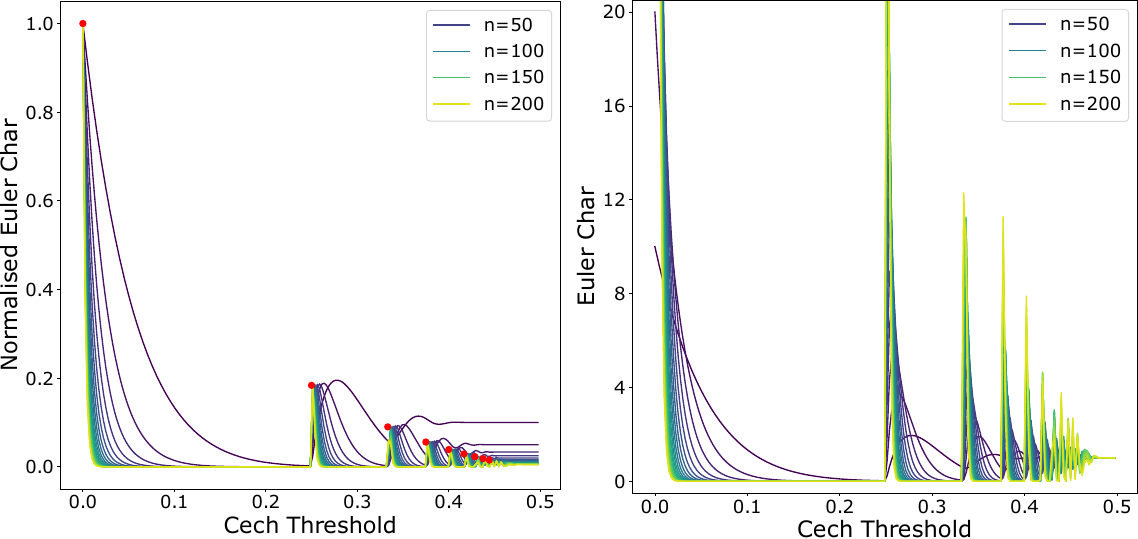}
    \caption{Left: Graphs of \textit{normalised} expected Euler characteristics, $y = n^{-1} \cdot \bar \chi(n, x)$, for $n \in \{10, 20, \ldots 200\}$ and $x \in [0,1]$. Right: Same as left, but we plot $y = \bar \chi(n, x)$, which are (un-normalised) expected Euler characteristics. Yellow curves correspond to larger $n$. Red circles are the limiting spikes, given by $\left( \frac{k}{2k+2}, \frac{(k/e)^k}{(k+1)!} \right)$ for all $k \ge 0$.}
\end{figure}

\explainbox{}{

\begin{customthm}{A}[Expected Euler Characteristic]\label{thmA}
    The following are true. \\

    (1) $\bar \chi(n, r)$ is a continuous piecewise-polynomial function in $r$, given explicitly as follows for $n>0$ and $r \in (0, 1)$:
    \[ \bar \chi\left( n, \frac{1-r}2 \right) = \sum_{k = 1}^{\lfloor 1/r \rfloor}  \binom nk (1 - kr)^{k-1} (kr)^{n-k} \]
    \vspace{3mm}
    
    (2) Normalised Euler charactreristics have the following peak values for all $k \ge 0$:
    \begin{align*}
        \lim_{n \rightarrow \infty} \max_{r \in I_k} \frac{\bar\chi (n,r)}n = \frac{(k/e)^k}{(k+1)!}
    \end{align*}
    where $I_k = \left( \frac{k}{2k+2}, \frac{k+1}{2k+4} \right)$.
    \vspace{3mm}

    (3) Let $k \ge 1$. Given $\epsilon > 0$, the following uniform bounds hold for all $r \in \left[ \frac{k}{2k+2}, \frac{k+1}{2k+4} \right)$ when $n$ is sufficiently large:
    \[ \frac{\bar \chi(n, r)}n - \epsilon \le  \frac{\bar b_{2k}(n, r)}n \le \frac{\bar \chi(n, r)}n \]
\end{customthm}
}

Theorem A allows us to plot \textit{exact values} of the expected Euler characteristic curves. The left side of Figure \ref{fig1} shows graphs of $f_n(r) = n^{-1} \cdot \bar\chi(n, r)$, which are \textit{normalised} versions of $\bar \chi$. We stress that these curves are not empirically obtained from a method like Monte Carlo; they are plots of the formula in Theorem A1. As $n$ becomes larger, $f_n(r)$ shows peaks that converge to a sequence of narrow spikes, as Theorem A2 predicts. The right side of the same figure shows the non-normalised graphs of $\bar\chi(n, r)$, where we see that the limiting peaks of Theorem A2 will peak into infinity as $n\rightarrow \infty$. 

Meanwhile all homotopy types arising from nerve complexes of circular arcs were completely classified in \cite{ncca}: they are either $\sphere^{2k+1}$ for some $k \ge 0$, or $\vee^a \sphere^{2k}$ for some $a, k \ge 0$. We observe that $\chi(\sphere^{2k+1}) = 0$, whereas $\chi(\vee^a \sphere^{2k}) = a+1$. Therefore in Figure \ref{fig1}, limiting spikes indicate contribution from the even-dimensional sphere bouquets $\vee^a \sphere^{2k}$ with large $a$, and the plateaus indicate contribution from the odd-dimensional spheres. Recalling that $\mathbf X_n$ is an i.i.d. sample of size $n$ drawn from $\sphere^1$, we have:

\explainbox{}{
\begin{customthm}{B}[Odd Spheres]\label{thmB}
    Let $k \ge 0$ be an integer, and also let $\epsilon, \delta >0$. Suppose that $|r - \nu_k| \le \tau_k-\epsilon$. Then for sufficiently large $n$, the following homotopy equivalence holds with probability at least $1-\delta$:
    \[ \Cech(\mathbf X_n, r) \simeq \SS^{2k+1} \]
    where
    \[ \nu_k = \frac{2k^2+4k+1}{4(k+1)(k+2)}, \quad \tau_k = \frac1{4(k+1)(k+2)} \]
\end{customthm}
}

\explainbox{}{
\begin{customthm}{C}[Even Spheres]\label{thmC}
    Let $k \ge 2$, $\eta \in (0,1)$. Suppose that $|r-\rho_{k,n}| \le \sigma_{k,\eta}/n$. Then for sufficiently large $n$, the following homotopy equivalence holds with probability at least $\eta \cdot k\omega_k$:
    \[ \Cech(\mathbf X_n, r) \simeq \vee^{a} \sphere^{2k-2}, \quad \text{for some } \frac{(1-\eta)\omega_k \cdot n}{2} \le a+1 \le \frac nk \]
    where
    \begin{align*}
        \rho_{k,n} = \frac{n(k+1)}{2k(n-1)}, \quad \sigma_{k, \eta} = \frac{(1-\eta)^3 (k\omega_k)^3}{320\sqrt{k+2}}, \quad \omega_k = \frac{(k-1)^{k-1}}{k! e^{k-1}}
    \end{align*}
\end{customthm}
}

\textbf{Remark 1.} In Theorem B, we note that $\nu_k = \frac12(\frac{k+1}{2k+4} + \frac{k}{2k+2})$ and $\tau_k = \frac12(\frac{k+1}{2k+4} - \frac{k}{2k+2})$, so that Theorem B covers most of each interval $r \in [\frac{k}{2k+2}, \frac{k+1}{2k+4}]$. Also to see Theorem C in action, one may simply set $\eta = 1/2$ to obtain results. 

\vspace{3mm}
\textbf{Remark 2.} Although all of the above results are proven for the Cech complex of circular arcs on the circle, similar behaviour are observed in the Rips complex constructed on the circle as well. Indeed, modifying Theorem B for the Rips complex immediately yields the following: for the Rips complex constructed from a finite random sample on a circle, all odd-dimensional spheres appear with positive persistence and probability approaching 1. Analogues of Theorems A and C for the Rips complex could not be immediately obtained with methods in this paper.

\vspace{5mm}
\textbf{Structure of the paper.} In Section 2 we prove Theorem A1, i.e. compute the expected Euler characteristic precisely. In Section 3 we prove Theorem A2 (Proposition \ref{main prop 1}), by analysing the limiting spikes of the expected Euler characteristics. In Section 4 we prove Theorem A3 and Theorem C, by using the classification of homotopy types arising from a nerve complex of circular arcs, thereby giving constraints on homotopy types and compute probabilistic bounds. In Section 5 we prove Theorem B, by using the classical method of stability of persistence diagram; this section works separately and doesn't use the Euler characteristic method.

Theorem C takes the most work to prove. It is a simplified version of Theorem \ref{elder C}, which has a few more parameters that can be tweaked to obtain similar variants of Theorem C. Theorem \ref{elder C} is obtained by combining three ingredients: Propositions \ref{main prop 1}, \ref{main prop 2}, and \ref{main prop 3}.

\vspace{5mm}
\textbf{Related works.} 

The classical result of Hausmann shows that the Vietoris-Rips complex constructed from the manifold with a small scale parameter recovers the homotopy type of the manifold \cite{hausmann}. Another classical result of Niyogi, Smale, Weinberger shows that if a Cech complex of small filtration radius is constructed from a finite random sample of a Euclidean submanifold, then the homotopy type of the manifold is recovered with high confidence \cite{nsw}.

Much work has been done for recovering topology of a manifold from its finite sample, when connectivity radius is scaled down with the sample size at a specific rate \cite{bobrowski1} \cite{vipond_boundary} \cite{kahle1} \cite{bobrowski2}. A central theme of this body of work is the existence of phase transitions when parameters controlling the scaling of connectivity radius are changed. For a comprehensive survey, see \cite{penrose_2003} and \cite{bobrowski_survey}. 

In comparison, the setting when connectivity radius is not scaled down with sample size is studied much less. Results on convergence of the topological quantities have been studied \cite{paik} \cite{euler_clt}, but not much attention has been devoted to analysing specific manifolds.

This paper builds on two important works that characterised the Vietoris-Rips and Cech complexes of subsets of the circle: \cite{ncca} and \cite{vr_circle}. Several variants of these ideas were studied, for ellipse \cite{vr_ellipse}, regular polygon \cite{vr_polygon}, and hypercube graph \cite{vr_hypercube}. Randomness in these systems were studied using dynamical systems in \cite{random_cyclic}. One key tool to further study the topology of Vietoris-Rips and Cech complexes arising from a manifold is metric thickening \cite{metric_thickening}. Using this tool, the Vietoris-Rips complex of the higher-dimensional sphere has been characterised up to small filtration radii \cite{vr_lim}.

\vspace{5mm}
{\footnotesize
\subsection*{Acknowledgements} ~\newline
The author is grateful to Henry Adams and Tadas Tem\v{c}inas for valuable discussions that led up to this paper. The author would also like to thank Vidit Nanda and Harald Oberhauser for their contributions during initial stages of this research.
\newline
\indent Uzu Lim is supported by the Korea Foundation for Advanced Studies. 
}

\section{Expected Euler characteristic}

In this section we compute the expected Euler characteristic precisely. We start with a simple calculation that also briefly considers the Vietoris-Rips complex, but soon after we only work with the Cech complex. Let $\on{VR}(\mathbf X_n, r)$ denote the Vietoris-Rips complex of threshold $r$. The following proposition reduces computation of expected values to the quantities $T_k$ and $Q_k$, defined below:
\begin{prop}
    For each $n>0$, let $\mathbf X_n$ be the iid sample drawn uniformly from $\mathbb S^1$. Then we have that:
    \begin{align*}
        \EE [\chi(\on{VR} (\mathbf X_n, r))] =& \sum_{k=1}^{n} (-1)^{k-1} \binom n{k} T_{k}(r) \\
        \EE [\chi(\Cech (\mathbf X_n, r))] =& 1 + \sum_{k=1}^{n} (-1)^{k} \binom n{k} Q_{k}(1 - 2r)
    \end{align*}
    where $T_k(r)$ is the probability that every pair of points in $\mathbf X_k$ are within distance $r$, and $Q_k(r)$ is the probability that open arcs of length $r$ centered at points of $\mathbf X_k$ cover $\mathbb S^1$. Expectation is taken over the iid sample $\mathbf X_n$.
\end{prop}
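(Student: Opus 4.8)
The plan is to expand each Euler characteristic as an alternating sum over faces and push the expectation inside. For a finite simplicial complex $K$ one has $\chi(K) = \sum_{k \ge 1} (-1)^{k-1} f_{k-1}(K)$, where $f_{k-1}(K)$ is the number of $k$-element vertex subsets that span a simplex. Since the $n$ sample points are a.s. distinct, the vertex set of each complex is $\mathbf X_n$ itself, so I would write $f_{k-1}$ as $\sum_{S} \mathbf 1[S \text{ spans a simplex}]$ over $k$-subsets $S \subseteq \mathbf X_n$. Linearity of expectation together with exchangeability — every $k$-tuple drawn from the i.i.d.\ sample is distributed as $\mathbf X_k$ — then gives
\[ \EE[\chi(\on{VR}(\mathbf X_n, r))] = \sum_{k=1}^n (-1)^{k-1}\binom nk p_k^{\on{VR}}, \qquad \EE[\chi(\Cech(\mathbf X_n, r))] = \sum_{k=1}^n (-1)^{k-1}\binom nk p_k^{\Cech}, \]
where $p_k^{\bullet}$ is the probability that $\mathbf X_k$ spans a simplex in the respective complex.

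For Vietoris--Rips this finishes things immediately: $\mathbf X_k$ spans a simplex precisely when all pairwise distances are $\le r$, so $p_k^{\on{VR}} = T_k(r)$, which is the first identity.

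For Čech I would use a De Morgan duality. The set $\{x_1,\dots,x_k\}$ spans a simplex iff $\bigcap_i B(x_i, r) \ne \emptyset$, where $B(x_i,r)$ is the (open) arc of radius $r$, of length $2r$. Its complement is a closed arc of length $1-2r$, i.e.\ an arc of radius $\tfrac12 - r$ centred at the antipode $x_i + \tfrac12$; complementing the intersection shows $\bigcap_i B(x_i,r) = \emptyset$ iff these antipodal arcs cover $\sphere^1$. Because $x \mapsto x + \tfrac12$ preserves the uniform law, the antipodes form $k$ i.i.d.\ uniform points, so $p_k^{\Cech} = 1 - Q_k(\tfrac12 - r)$. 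Substituting and collapsing $\sum_{k=1}^n (-1)^{k-1}\binom nk = 1$ produces
\[ \EE[\chi(\Cech(\mathbf X_n, r))] = 1 - \sum_{k=1}^n (-1)^{k-1}\binom nk Q_k(\tfrac12 - r) = 1 + \sum_{k=1}^n (-1)^k \binom nk Q_k(\tfrac12 - r). \]

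Every step here is short, so there is no single hard obstacle; the one place to be careful is the Čech duality — correctly identifying the complement of a radius-$r$ arc as a radius-$(\tfrac12-r)$ arc about the antipode, and checking that the open-versus-closed discrepancy in "covering" affects only configurations where two arc endpoints coincide, which is a null set for a continuous distribution and hence does not change $Q_k$. The remainder is just linearity of expectation, exchangeability of the i.i.d.\ sample, and the binomial identity $(1-1)^n = 0$.
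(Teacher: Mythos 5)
Your proof is correct and follows essentially the same route as the paper: expand $\chi$ as an alternating sum of simplex counts, use linearity and exchangeability to get $\binom{n}{k}$ times a probability for $\mathbf X_k$, identify that probability with $T_k(r)$ for Vietoris--Rips, and use the De Morgan/complement-arc duality plus $\sum_{k=1}^n(-1)^{k-1}\binom nk = 1$ for the \v{C}ech case. Your extra care about the antipodal centering and the open-versus-closed null-set issue is a slight refinement of the paper's argument, not a departure from it.
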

\begin{proof}
Denoting by $s_k(K)$ the number of $k$-simplices in a simplicial complex $K$, we have that:
\[ \EE [ s_k(\on{VR} (\mathbf X_n, r))] = \binom n{k} T_{k}(r) \]
and thus
\begin{align*}
    \EE [\chi(\on{VR} (\mathbf X_n, r))] = \sum_{k=0}^{n-1} (-1)^{k} \EE [ s_k(\on{VR} (\mathbf X_n, r))] = \sum_{k=1}^{n} (-1)^{k-1} \binom n{k} T_{k}(r)
\end{align*}
The relation for the Cech complex is derived in the same way, except we note the following: the probability that arcs of radius $r$ centered at points of $\mathbf X_k$ intersects nontrivially is equal to $1 - Q_k(1-2r)$. This is by De Morgan's Law: for any collection of sets $\{U_j \subseteq \mathbb S^1 \}_{j \in J}$, we have $\cap_{j \in J} U_j = \emptyset$ iff $\cup_{j \in J} U_j^{\on{c}} = \mathbb S^1$. In the case of circle (of circumference 1), complement of a closed arc of radius $r$ is an open arc of length $1-2r$. Applying this logic, we obtain:
\[ \EE [\chi(\Cech (\mathbf X_n, r))] = \sum_{k=1}^{n} (-1)^{k-1} \binom n{k} (1-Q_{k}(1-2r)) \]
which is easily seen to be the same as the asserted expression (note that $\sum_{k=1}^n (-1)^{k-1} \binom nk = 1$.)
\end{proof}

The $Q_k$ were computed by Stevens in 1939 \cite{stevens}. We reproduce the proof for completeness.
\begin{thm}[Stevens]
    If $k$ arcs of fixed length $a$ are independently, identically and uniformly sampled from the circle of circumference 1, then the probability that these arcs cover the circle is equal to the following:
    \[ Q_k(a) = \sum_{l = 0}^{\lfloor 1/a \rfloor} (-1)^l \binom kl (1-la)^{k-1} \]
\end{thm}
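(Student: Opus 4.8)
The plan is to reproduce Stevens' argument, whose key move is to trade the covering event for a statement about the gaps between consecutive arc endpoints, and then to evaluate the resulting probability by inclusion–exclusion over a simplex. Write the $k$ arcs as $[x_i, x_i + a]$ with $x_1,\dots,x_k$ i.i.d. uniform on $\sphere^1 = [0,1]/\sim$, and assume $a<1$ (the only interesting range). The first step is the elementary geometric observation that the arcs cover $\sphere^1$ if and only if every cyclic gap between consecutive left endpoints is at most $a$: listing the sorted left endpoints cyclically as $y_1,\dots,y_k$, we have $[y_i, y_{i+1}] \subseteq [y_i, y_i + a]$ exactly when $y_{i+1}-y_i \le a$, and these closed arcs tile the circle; conversely a cyclic gap longer than $a$ produces a point just past $y_i + a$ that no arc reaches (one checks neither the arc at $y_i$ nor any arc on the far side of the gap covers it, the latter using $a<1$). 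Thus $Q_k(a/2) = \Prob(\text{all }k\text{ cyclic spacings are}\le a)$.

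The second step identifies the law of the spacing vector $(S_1,\dots,S_k)$ with $\sum_i S_i = 1$. By rotational invariance we may place $x_1$ at $0$; the remaining $k-1$ points are then i.i.d. uniform on $[0,1)$, and the spacings are precisely the $k$ interval lengths into which $k-1$ uniform points partition $[0,1]$. This is the classical fact that $(S_1,\dots,S_k)$ is uniformly distributed on the standard simplex $\Delta = \{s \in \RR^k : s_i \ge 0,\ \sum_i s_i = 1\}$ (equivalently, Dirichlet with all parameters equal to $1$), which follows from the constant-Jacobian change of variables from the order statistics of the $k-1$ uniform points to their successive differences.

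The third step computes the joint ``large spacing'' probabilities needed for the complementary event. For an index set $I$ with $|I|=j$, the translation $s_i \mapsto s_i - a$ for $i \in I$ carries $\{s \in \Delta : s_i \ge a\ \forall i \in I\}$ onto a scaled copy of $\{t \ge 0,\ \sum_i t_i = 1-ja\}$, so comparing $(k-1)$-dimensional volumes gives $\Prob(S_i \ge a\ \forall i \in I) = (1-ja)_+^{k-1}$, depending only on $j$. Inclusion–exclusion over the events $\{S_i > a\}$ then yields
\[
\Prob(\text{some }S_i > a) = \sum_{j=1}^{k} (-1)^{j-1}\binom{k}{j}(1-ja)_+^{k-1},
\]
hence $Q_k(a/2) = 1 - \Prob(\text{some }S_i>a) = \sum_{j=0}^{k}(-1)^{j}\binom{k}{j}(1-ja)_+^{k-1}$. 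Since $(1-ja)_+^{k-1}$ vanishes once $ja \ge 1$, the sum truncates at $j = \lfloor 1/a\rfloor$, giving the stated formula.

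I expect the only genuinely delicate points to be bookkeeping: proving the covering $\Leftrightarrow$ spacing equivalence cleanly when arcs wrap around the circle (handled by the standing assumption $a<1$ and by using closed arcs to dispose of the measure-zero degenerate configurations), and matching the truncation index with the $(\cdot)_+$ when $1/a$ happens to be an integer. The probabilistic core — spacings uniform on the simplex, together with the volume-scaling identity — is standard and short.
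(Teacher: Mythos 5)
Your proposal is correct and follows essentially the same route as the paper: covering is traded for the event that no cyclic spacing exceeds $a$, and the probability is then computed by inclusion--exclusion over the large-gap events, with the key identity $\Prob(S_i \ge a\ \forall i \in I) = (1-ja)^{k-1}$ obtained by exactly the translation-and-scaling volume argument the paper uses (in its footnote). Your phrasing via the spacing vector being uniform on the simplex versus the paper's direct decomposition of the ordered configuration space into the sets $E_J$ and $\bar E_J$ is only a cosmetic difference.
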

\begin{proof}
    The proof is an application of inclusion-exclusion principle. Consider the set $E = \{(x_1, \ldots x_k) | 0\le x_1 < \cdots < x_k < 1 \}$. For each collection of indices $J \subseteq \{1, \ldots k\}$, define $\bar E_J$ and $E_J$ as the following subsets of $E$:
    \begin{align*}
        E_J =& \{ (x_1, \ldots x_k) \in E | j \in J \iff x_{j+1} - x_j > a \} \\
        \bar E_J =& \{ (x_1, \ldots x_k) \in E | j \in J \implies x_{j+1} - x_j > a \} = \bigsqcup_{J' \supseteq J} E_{J'}
    \end{align*}
    By definition, we have $\on{Vol}(E_\emptyset) = Q_k(a)$. To compute it, we apply the inclusion-exclusion principle for the membership of each $E_J$ over $\bar E_{J'}$ whenever $J' \supseteq J$. Noting the relation $\sum_{l=1}^k (-1)^{l+1} \binom kl = 1$, we see that:
    \[ 1 = \sum_{J \subseteq \{1, \ldots k\}} \on{Vol}(E_J) = \on{Vol}(E_\emptyset) - \sum_{\emptyset \neq J \subseteq \{1, \ldots k\} } (-1)^{\# J} \on{Vol}(\bar E_J) \]
    Finally, if $l = \# J$ and $l \le \lfloor 1/a \rfloor$, then $\on{Vol}(\bar E_J) = (1-la)^{n-1}$. This is because demanding gap conditions $x_{i+1} - x_i > a$ at $l$ places is equivalent to sampling $n-1$ points from an interval of length $1-la$\footnote{This can be seen more precisely by considering the collection $E'$ of $(y_1, \ldots y_{k-1})$ defined by $y_i = x_{i+1} - x_i > 0$ and $\sum y_i \le 1$, and then considering the subset $E'_J$ defined by $y_i > a$ for $i \in J$. The quantity of interest is $\on{Vol}(E'_J) / \on{Vol}(E')$. Furthermore, the map $(y_1, \ldots y_{k-1}) \mapsto (y_1 - \mathbf{1}_{1 \in J}, \ldots y_{k-1} - \mathbf{1}_{k-1 \in J})$ isometrically maps $E_J'$ to $(1-la) \cdot E'$, so that $\on{Vol}(E'_J) = (1-la)^{k-1} \on{Vol}(E')$ due to the $(k-1)$-dimensional volume scaling. This is exactly the original claim.}. Meanwhile if $l > \lfloor 1/a \rfloor$, then we always have $\on{Vol}(\bar E_J) = 0$. Plugging these into the above equation, we get:
    \[ \on{Vol}(E_\emptyset) = 1 + \sum_{l=1}^{\lfloor 1/a\rfloor} (-1)^{l} \binom kl (1-la)^{n-1} \]
    as desired.
\end{proof}

We then get the following:

\begin{thm}[Theorem A]
    Expected Euler characteristic of random Cech complex on a circle of unit circumference obtained from $n$ points and filtration radius $(1-r)/2$ is:
    \[ \bar \chi\left( n, \frac{1-r}2 \right) = \sum_{k = 1}^{\lfloor 1/r \rfloor}  \binom nk (1 - kr)^{k-1} (kr)^{n-k} \]
    In particular, $\bar \chi(n, r)$ is a continuous piecewise-polynomial function in $r$.
\end{thm}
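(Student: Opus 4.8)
The plan is to combine the two results already established in this section: the Proposition expressing $\EE[\chi(\Cech(\mathbf X_n,r))]$ in terms of the covering probabilities $Q_k$, and Stevens' closed form for $Q_k$. First I would substitute the filtration radius $(1-r)/2$ into the \v{C}ech identity of the Proposition. Since the complement of a closed arc of radius $(1-r)/2$ on a circle of circumference $1$ is an open arc of radius $2^{-1}-(1-r)/2 = r/2$, this gives
\[ \bar\chi\!\left(n,\tfrac{1-r}{2}\right) = 1 + \sum_{k=1}^n (-1)^k \binom nk Q_k\!\left(\tfrac r2\right). \]
Applying Stevens' theorem with arc length $a=r$ (so that $\lfloor 1/a\rfloor = \lfloor 1/r\rfloor$) rewrites $Q_k(r/2)$ as $\sum_{l=0}^{\lfloor 1/r\rfloor}(-1)^l\binom kl(1-lr)^{k-1}$, and substituting produces a double sum over $k$ and $l$.

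The core of the argument is then a purely algebraic simplification of this double sum. I would exchange the order of summation so that $l$ runs on the outside. The $l=0$ term contributes $\sum_{k=1}^n(-1)^k\binom nk = -1$, which exactly cancels the leading $1$. For each $l\ge 1$ I would use the subset-of-a-subset identity $\binom nk\binom kl = \binom nl\binom{n-l}{k-l}$ (which also takes care of the summation range, since $\binom kl$ vanishes for $k<l$), reindex by $j=k-l$, and pull out the factor $(-1)^l\binom nl(1-lr)^{l-1}$; what remains is $\sum_{j=0}^{n-l}\binom{n-l}{j}\bigl(-(1-lr)\bigr)^j = (lr)^{n-l}$ by the binomial theorem. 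Collecting the sign $(-1)^l$ that comes out of the reindexing against the outer $(-1)^l$ leaves precisely $\binom nl(1-lr)^{l-1}(lr)^{n-l}$, and summing over $l$ from $1$ to $\lfloor 1/r\rfloor$ gives the claimed formula after renaming $l$ to $k$.

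For the ``in particular'' clause, note that on each interval $r\in\bigl(\tfrac1{m+1},\tfrac1m\bigr]$ the quantity $\lfloor 1/r\rfloor$ equals the constant $m$, so the right-hand side is a genuine polynomial in $r$ there. Continuity across a breakpoint $r=1/m$ with $m\ge 2$ follows because the top term $\binom nm(1-mr)^{m-1}(mr)^{n-m}$ carries the factor $(1-mr)^{m-1}$ with $m-1\ge 1$, which vanishes at $r=1/m$; hence the one-sided limits from the two adjacent polynomial pieces agree. Finally, since $r\mapsto(1-r)/2$ is an affine change of variable, $\bar\chi(n,\cdot)$ inherits the continuous piecewise-polynomial property on the corresponding range of radii.

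The step I expect to demand the most care is the double-sum manipulation: keeping the index ranges consistent after applying $\binom nk\binom kl = \binom nl\binom{n-l}{k-l}$, verifying that the two stray signs cancel, and noting that the factor $(1-lr)^{l-1}$ is harmless even when $1-lr$ could vanish at an endpoint because the collapsing identity is a polynomial identity. Everything else is a direct substitution of the Proposition and Stevens' theorem.
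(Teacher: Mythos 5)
Your proposal is correct and follows essentially the same route as the paper: substitute the \v{C}ech--$Q_k$ identity and Stevens' formula, swap the order of summation so the $l=0$ term cancels the leading $1$, apply $\binom nk\binom kl=\binom nl\binom{n-l}{k-l}$, and collapse the inner sum with the binomial theorem to get $(lr)^{n-l}$. Your added remarks on the piecewise-polynomial structure and continuity at the breakpoints $r=1/m$ (via the vanishing factor $(1-mr)^{m-1}$) are a correct supplement to the ``in particular'' clause, which the paper leaves implicit.
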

\begin{proof}
    Substituting the $Q_k$ expression in, we get:
    \begin{align*}
        \bar \chi\left( n, \frac{1-r}2 \right) =& 1 + \sum_{k=1}^{n} (-1)^{k} \binom n{k} Q_{k}(r) \\
        =& 1 + \sum_{l = 0}^{\lfloor 1/r \rfloor} \sum_{k=1}^{n} (-1)^{k+l} \binom n{k} \binom kl (1-r l)^{k-1} \\
        =& \sum_{l = 1}^{\lfloor 1/r \rfloor} \sum_{k=1}^{n} (-1)^{k+l} \binom n{k} \binom kl (1-r l)^{k-1}
    \end{align*}
    where we switched the order of summation in the second equality, and isolating the $l=0$ part cancels out the $1$ in the third equality. Noting that $\binom nk \binom kl = \binom nl \binom{n-l}{k-l}$, we further get:
    \begin{align*}
        \bar \chi\left( n, \frac{1-r}2 \right) =& \sum_{l = 1}^{\lfloor 1/r \rfloor} (-1)^{l} \binom nl (1-rl)^{-1} \sum_{k=l}^{n} \binom{n-l}{k-l} (r l - 1)^{k} \\
        =& \sum_{l = 1}^{\lfloor 1/r \rfloor} \binom nl (1-rl)^{l-1} \sum_{k=0}^{n-l} \binom{n-l}{k} (r l - 1)^{k} \\
        =& \sum_{l = 1}^{\lfloor 1/r \rfloor}  \binom nl (1 - rl)^{l-1} (rl)^{n-l}
    \end{align*}
\end{proof}

\section{Limit behaviour of Euler characteristic}

We prove a sequence of lemmas in this section to characterise the limiting spikes in Figure \ref{fig1}. The main idea is that only one summand in the expected Euler characteristic contributes mainly to the spike, and this is a polynomial term that can be studied with calculus. The main results of this section are Propositions \ref{spike prop} and \ref{main prop 1}. The two lemmas leading up to it are exercises in calculus that explain the specific situation of our expected Euler characteristic.

\begin{lem}
    For $a, b \ge 1$, the function $f(t) = t^a(1-t)^b$ satisfies the following:
    
    (a) In the range $0 \le t \le 1$, $f(t)$ achieves the unique maximum value at $t = a/(a+b)$:
    \[ \max_{0 \le t \le 1} f(t) = f\left( \frac{a}{a+b} \right) = \frac{a^a b^b}{(a+b)^{a+b}} \]
    Also, $f(t)$ is increasing on $t \in (0, a/(a+b))$ and decreasing on $t \in (a/(a+b), 1)$.
    
    (b) The following \textit{linear} lower bounds hold:
    \begin{align*}
        f(t) \ge & u\bigg( (a+b)vt  - av + 1 \bigg) \text{, when } 0 < t < \frac{a}{a+b} \\
        f(t) \ge & u\bigg( -(a+b)vt  + av + 1 \bigg) \text{, when } \frac{a}{a+b} < t < 1
    \end{align*}
    where
    \[u = \frac{a^a b^b}{(a+b)^{a+b}}, \quad v = \sqrt{\frac{a+b}{ab}} \]
    
    (c) For each $\lambda \in [0, 1]$, we have that:
    \[ \left| t - \frac a{a+b} \right| < \frac{(1-\lambda) \sqrt{ab}}{(a+b)^{3/2}} \implies t^a(1-t)^b > \lambda u \]
\end{lem}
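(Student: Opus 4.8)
The plan is to handle (a) as a quick critical-point computation, to put the real effort into (b), and to read off (c) from (b) in a few lines. For (a) I would differentiate: $f'(t)=t^{a-1}(1-t)^{b-1}\big(a-(a+b)t\big)$, so on $(0,1)$ the sign of $f'$ agrees with that of $a-(a+b)t$, which is positive exactly for $t<a/(a+b)$ and negative for $t>a/(a+b)$. This gives the asserted monotonicity and the unique interior maximum at $t^\ast:=a/(a+b)$, and substituting yields $f(t^\ast)=a^ab^b/(a+b)^{a+b}=:u$.

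For (b), note first that the substitution $t\mapsto 1-t$ carries $f$ to the same function with $a$ and $b$ interchanged while fixing both $u$ and $v$; applying the first inequality to this reflected function and translating back produces the second, so it suffices to prove $f(t)\ge\ell(t)$ on $(0,t^\ast)$, where $\ell(t):=u\big((a+b)vt-av+1\big)$. I would record that $\ell$ is increasing, that $\ell(t^\ast)=u=f(t^\ast)$, and that $\ell$ vanishes at $t_0:=t^\ast-\tfrac{1}{(a+b)v}=t^\ast-\tfrac{\sqrt{ab}}{(a+b)^{3/2}}$, with $t_0\in(0,t^\ast)$ (the bound $t_0>0$ reduces to $a^2+b(a-1)\ge 0$). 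On $(0,t_0]$ the claim is trivial since $\ell(t)\le\ell(t_0)=0\le f(t)$. The heart of the matter is that $f$ is concave on $[t_0,t^\ast]$: a direct computation gives $f''(t)=t^{a-2}(1-t)^{b-2}P(t)$ with $P(t)=(a+b)(a+b-1)(t-t^\ast)^2-\tfrac{ab}{a+b}$, a parabola centred exactly at $t^\ast$, so $P\le 0$ on $[t_0,t^\ast]$ as soon as $\tfrac{\sqrt{ab}}{(a+b)^{3/2}}\le\tfrac{\sqrt{ab}}{(a+b)\sqrt{a+b-1}}$, which holds because $a+b\ge 2$. Granting this concavity, $f-\ell$ is concave on $[t_0,t^\ast]$ and non-negative at both endpoints (it equals $f(t_0)\ge 0$ at $t_0$ and $0$ at $t^\ast$), hence non-negative throughout, which finishes the first inequality.

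For (c), the case $\lambda=1$ is vacuous since the hypothesis region is empty, so I would assume $\lambda<1$ and rewrite the hypothesis as $|t-t^\ast|<\tfrac{1-\lambda}{(a+b)v}$. If $t=t^\ast$ then $f(t)=u>\lambda u$; if $t<t^\ast$, part (b) gives $f(t)\ge u\big(1-(a+b)v(t^\ast-t)\big)>u\big(1-(1-\lambda)\big)=\lambda u$; and the case $t>t^\ast$ is identical using the second inequality of (b). The only genuine obstacle in all of this is pinning down the concavity window in (b): one must compute $f''$ exactly, notice that its quadratic factor is centred at $t^\ast$, and check that $t_0$ falls inside the concave region. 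A minor point to keep in mind is the edge cases $a=1$ or $b=1$, where the exponents $a-2,b-2$ in $f''$ are negative — harmless, since every estimate lives on the open interval $(0,1)$ and $a+b-1\ge 1$ keeps all radicals defined.
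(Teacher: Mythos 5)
Your proposal is correct and follows essentially the same route as the paper: compute $f'$ and $f''$, locate the inflection points $t^\ast \pm \frac{1}{a+b}\sqrt{\frac{ab}{a+b-1}}$, deduce the linear lower bound from concavity on the window containing $t^\ast - \frac{\sqrt{ab}}{(a+b)^{3/2}}$ (with the chord argument and the trivial region where the line is negative), and read off (c) from (b). The only difference is that you spell out the concavity/endpoint justification and the reflection $t \mapsto 1-t$ explicitly, which the paper leaves implicit.
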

\begin{proof}
    The first two derivatives are:
    \begin{align*}
        f'(t) =& \bigg( a - (a+b)t \bigg) t^{a-1} (1-t)^{b-1} \\
         f''(t) =& \bigg( (a+b)(a+b-1)t^2 + 2a(1-a-b)t + a(a-1) \bigg) t^{a-2} (1-t)^{b-2}
    \end{align*}
    The first derivative vanishes at $t \in \{ a/(a+b), 0, 1 \}$ and the second derivative vanishes at $t \in \{ t_0 \pm \eta_0, 0, 1 \}$ where
    \[ t_0 = \frac a{a+b}, \quad \eta_0 = \frac1{a+b}\sqrt{\frac{ab}{a+b-1}} > \frac{\sqrt{ab}}{(a+b)^{3/2}} = \eta_1 \]
    The first derivative is positive at $(0, a/(a+b))$ and negative at $(a/(a+b), 1)$. Thus the maximum at $t \in [0, 1]$ is given by:
    \[ f(t_0) = \frac{a^a b^b}{(a+b)^{a+b}} \]
    Thus
    \begin{align*}
        f(t) \ge \frac{f(t_0)}{\eta_1}(t-t_0) + f(t_0) \text{, when } 0 < t < t_0 \\
        f(t) \ge \frac{-f(t_0)}{\eta_1}(t-t_0) + f(t_0) \text{, when } t_0 < t < 1
    \end{align*}
    and
    \begin{align*}
        \pm \frac{f(t_0)}{\eta_1}(t-t_0) + f(t_0) =& \frac{a^a b^b}{(a+b)^{a+b}} \left( \pm \frac{(a+b)^{3/2}}{\sqrt{ab}} \left( t - \frac{a}{a+b} \right) + 1 \right)
    \end{align*}
    (c) follows from the linear bound of (b).
\end{proof}

\begin{lem}
    Let $m, n \ge 1$ be integers and define:
    \[ f_{m, n}(t) = \binom nm (mt)^{m-1} (1-mt)^{n-m} \]
    Then $f_{m, n}$ satisfies the following:
    
    (a) $f_{m,n}(t)$ is increasing when $0< t < t_0$ and decreasing when $t_0 < t < 1/m$ where $t_0 = \frac1{n-1}(1-\frac1m)$.
    
    (b) The maximum over $0 < t < 1/m$ is given by:
    \[ \max_{0 < mt < 1} f_{m,n}(t) = f_{m,n}(t_0) = \binom nm \frac{(m-1)^{m-1}(n-m)^{n-m}}{(n-1)^{n-1}} \]
    
    (c) For each $\lambda \in [0, 1]$, we have that:
    \[ |t-t_0| < \frac{(1-\lambda) \sqrt{(m-1)(n-m)}}{m (n-1)^{3/2}} \implies f_{m,n}(t) > \lambda f_{m, n}(t_0) \]
    
    (d) The normalised limit of maximum as $n \rightarrow \infty$ is given by:
    \begin{align*}
        \lim_{n \rightarrow \infty} \frac{\max_{0 < t < 1/m} f_{m,n}(t)}{n} = \frac{(m-1)^{m-1}}{m! e^{m-1}}
    \end{align*}
\end{lem}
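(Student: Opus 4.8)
The plan is to start from the closed form for the maximum already established in part (b), namely
\[ \max_{0 < t < 1/m} f_{m,n}(t) = f_{m,n}(t_0) = \binom nm \frac{(m-1)^{m-1}(n-m)^{n-m}}{(n-1)^{n-1}}, \]
so that the whole statement reduces to an elementary asymptotic computation of $\frac1n f_{m,n}(t_0)$ as $n \to \infty$ with $m$ fixed. I would isolate the three $n$-dependent pieces: the binomial coefficient, the factor $(n-m)^{n-m}$, and the factor $(n-1)^{n-1}$ in the denominator.

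First I would record the standard fact that for fixed $m$, $\binom nm = \frac{n(n-1)\cdots(n-m+1)}{m!} \sim \frac{n^m}{m!}$ as $n \to \infty$. Next I would rewrite the remaining ratio by splitting the denominator as $(n-1)^{n-1} = (n-1)^{n-m}(n-1)^{m-1}$, giving
\[ \frac{(n-m)^{n-m}}{(n-1)^{n-1}} = \left(\frac{n-m}{n-1}\right)^{n-m}\cdot \frac{1}{(n-1)^{m-1}} = \left(1 - \frac{m-1}{n-1}\right)^{n-m}\cdot \frac{1}{(n-1)^{m-1}}. \]
Then I would use the limit $\left(1 - \frac{m-1}{n-1}\right)^{n-1} \to e^{-(m-1)}$ together with $\left(1 - \frac{m-1}{n-1}\right)^{-(m-1)} \to 1$ to conclude $\left(1 - \frac{m-1}{n-1}\right)^{n-m} \to e^{-(m-1)}$, and note $(n-1)^{m-1} \sim n^{m-1}$.

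Combining these,
\[ \frac{f_{m,n}(t_0)}{n} \sim \frac1n \cdot \frac{n^m}{m!}\cdot (m-1)^{m-1}\cdot e^{-(m-1)}\cdot \frac{1}{n^{m-1}} = \frac{(m-1)^{m-1}}{m!\, e^{m-1}}, \]
which is the claimed limit. There is no real obstacle here; the only point requiring a little care is bookkeeping the exponents so that the powers of $n$ cancel exactly (the $n^m$ from the binomial, the $1/n$ prefactor, and the $1/n^{m-1}$ from $(n-1)^{m-1}$ combine to a constant), and making sure each $\sim$ is an honest ratio-tending-to-$1$ statement so that the product of the equivalences is again an equivalence. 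One may alternatively phrase the entire argument through $\log f_{m,n}(t_0)$ and a first-order expansion of $(n-m)\log(n-m) - (n-1)\log(n-1)$, but the factored form above is the cleanest route.
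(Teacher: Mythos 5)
Your argument for part (d) is correct and is essentially the computation the paper itself performs: both expand $\binom nm$ to extract the power $n^m/m!$, cancel it against the $1/n$ prefactor and the $(n-1)^{m-1}$ piece of the denominator, and finish with $\bigl(1-\tfrac{m-1}{n-1}\bigr)^{n-1}\to e^{-(m-1)}$; your use of $\sim$-equivalences versus the paper's exact cancellation of the falling factorial $(n-1)\cdots(n-m+1)$ against $(n-1)^{m-1}$ is only a difference in bookkeeping, and each of your equivalences is a genuine ratio-to-one statement, so the product argument is sound.

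The gap is that the statement being proved is the whole lemma, and parts (a), (b), (c) are nowhere addressed: you explicitly take the closed form $f_{m,n}(t_0)=\binom nm\frac{(m-1)^{m-1}(n-m)^{n-m}}{(n-1)^{n-1}}$ as given, but that formula is precisely the content of (b), and (c) is a separate quantitative statement that does not follow from (d). In the paper these three parts are obtained by applying the preceding calculus lemma on $f(t)=t^a(1-t)^b$ with $a=m-1$, $b=n-m$ under the substitution $s=mt$ (so $t_0=\frac1{n-1}\bigl(1-\frac1m\bigr)$ corresponds to $s=\frac{a}{a+b}=\frac{m-1}{n-1}$, and the window in (c) is the window of that lemma's part (c) rescaled by $1/m$). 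To make your proof complete you must either carry out this reduction or prove (a)--(c) directly (e.g.\ by differentiating $\log f_{m,n}$ and establishing the linear lower bound near the maximum that yields (c)); only then is the starting point of your asymptotic computation justified.
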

\begin{proof}
    (a)-(c) follow from the previous lemma. For (d), we compute:
    \begin{align*}
         \lim_{n \rightarrow \infty} \frac{\max_{0 < t < 1/m } f_{m,n}(t) }n =& \frac{(m-1)^{m-1}}{m!} \lim_{n \rightarrow \infty} (n-1)(n-2) \cdots (n-m+1) \frac{(n-m)^{n-m}}{(n-1)^{n-1}} \\
         =& \frac{(m-1)^{m-1}}{m!} \lim_{n \rightarrow \infty} \frac{(n-m)^{n-m}}{(n-1)^{n-m}}
    \end{align*}
    and also
    \begin{align*}
        \lim_{n \rightarrow \infty} \frac{(n-m)^{n-m}}{(n-1)^{n-m}} = \lim_{n \rightarrow \infty} \left( 1 - \frac {m-1}{n-1} \right)^{n-m} = \lim_{n \rightarrow \infty} \left( 1 - \frac {m-1}{n-1} \right)^{n-1} = \frac{1}{e^{m-1}}
    \end{align*}
    which gives the desired expression.
\end{proof}

\begin{prop} \label{spike prop}
    Suppose that $m, n$ are integers with $2 \le  m < \sqrt{n}$. The following holds for $\bar \chi(n, r)$.
    
    (a) The following bounds hold:
    \begin{align*}
        & a_{m, n} \le \frac{\bar \chi(n, s_{m, n})}{n} \le M \le a_{m, n} + b_{m,n}
    \end{align*}
    where
    \begin{align*} 
        M =& \max \left\{ \frac 1n \bar \chi \left( n, \frac{1-r}{2} \right) \> \bigg| \>  r \in \left( \frac1{m+1} , \frac1m \right) \right\} \\
        s_{m, n} =& \frac {(m-1)n}{2(n-1)m} \\
        a_{m, n} =& \binom nm \frac{(m-1)^{m-1}(n-m)^{n-m}}{n (n-1)^{n-1}} \\
        b_{m, n} =& e n^{m-1} \left( 1 - \frac1{m+1} \right)^{n-1}
    \end{align*}
    (b) We have the following limits:
    \begin{align*}
        \lim_{n \rightarrow \infty} a_{m, n} = \frac{(m-1)^{m-1}}{m! e^{m-1}}, \quad \lim_{n \rightarrow \infty} b_{m, n} = 0
    \end{align*}
    (c) Suppose additionally that $n > 2m^2$. Then for each $\lambda \in [0, 1]$, we have that:
    \[ \left| r - \frac{n - m}{(n-1)m} \right| < \frac{(1-\lambda) \sqrt{(m-1)(n-m)} }{m(n-1)^{3/2}} \implies \frac1n \bar \chi \left( n, \frac{1-r}2 \right) > \lambda a_{m, n} \]
    This condition for $r$ in particular satisfies $r \in \left( \frac1{m+1}, \frac1m \right]$.
\end{prop}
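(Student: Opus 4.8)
The plan is to isolate the $k=m$ summand of the Euler characteristic formula from Theorem~A1 and feed it into the two calculus lemmas of this section. Write $r^{*}=\frac{n-m}{m(n-1)}$, so that $mr^{*}=\frac{n-m}{n-1}$, $1-mr^{*}=\frac{m-1}{n-1}$, and $\frac{1-r^{*}}{2}=s_{m,n}$. From $m^{2}<n$ one checks $\frac{1}{m+1}<r^{*}<\frac1m$ (indeed $\frac1m-r^{*}=\frac{m-1}{m(n-1)}$ and $r^{*}-\frac{1}{m+1}=\frac{n-m^{2}}{m(m+1)(n-1)}$), so $\lfloor 1/r^{*}\rfloor=m$; more generally, for every $r\in\left(\frac{1}{m+1},\frac1m\right]$, Theorem~A1 gives
\[ \bar\chi\left(n,\tfrac{1-r}{2}\right)=\sum_{k=1}^{m}\binom{n}{k}(1-kr)^{k-1}(kr)^{n-k}, \]
a sum of nonnegative terms. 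Part~(a) of the first lemma of this section, applied to $t^{a}(1-t)^{b}$ with $a=n-m$, $b=m-1$, $t=mr$ (so $a+b=n-1$), shows that the $k=m$ summand is maximised over $r$ at $mr=\frac{n-m}{n-1}$, that is at $r=r^{*}$, with value $\binom{n}{m}\frac{(m-1)^{m-1}(n-m)^{n-m}}{(n-1)^{n-1}}=n\,a_{m,n}$.

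For part~(a), the lower bound $a_{m,n}\le\bar\chi(n,s_{m,n})/n$ follows by retaining only the $k=m$ term at $r=r^{*}$ in the display, and the middle inequality $\bar\chi(n,s_{m,n})/n\le M$ is immediate since $s_{m,n}=\frac{1-r^{*}}{2}$ with $r^{*}$ in the relevant interval. For the upper bound, fix $r\in\left(\frac{1}{m+1},\frac1m\right)$: the $k=m$ term is at most $n\,a_{m,n}$ by the previous paragraph, and for $1\le k\le m-1$ I would use $\binom nk\le n^{m-1}$, $(1-kr)^{k-1}\le1$, and $kr<\frac{m-1}{m}\le\frac{m}{m+1}$ to obtain
\[ (kr)^{n-k}<\left(\tfrac{m}{m+1}\right)^{n-k}=\left(\tfrac{m}{m+1}\right)^{n-1}\left(1+\tfrac1m\right)^{k-1}<e\left(\tfrac{m}{m+1}\right)^{n-1}. \]
Summing the $m-1$ terms with $k\le m-1$ and using $m-1<n$ then gives at most $e\,n^{m}\left(\frac{m}{m+1}\right)^{n-1}=n\,b_{m,n}$, whence $\bar\chi(n,\tfrac{1-r}{2})\le n\,a_{m,n}+n\,b_{m,n}$ for every such $r$, and therefore $M\le a_{m,n}+b_{m,n}$.

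Part~(b) is then quick: $a_{m,n}=\frac1n\max_{0<t<1/m}f_{m,n}(t)$ for the function $f_{m,n}$ of the second lemma of this section, so the convergence $a_{m,n}\to\frac{(m-1)^{m-1}}{m!\,e^{m-1}}$ is exactly part~(d) of that lemma, while $b_{m,n}=e\,n^{m-1}\left(\frac{m}{m+1}\right)^{n-1}\to0$ because the geometric factor dominates the polynomial one for fixed $m$. For part~(c), I would first verify the hypothesis on $r$ forces $r\in\left(\frac{1}{m+1},\frac1m\right]$: since $1-\lambda\le1$, the radius is at most $\frac{\sqrt{(m-1)(n-m)}}{m(n-1)^{3/2}}$, which is $\le\frac{m-1}{m(n-1)}=\frac1m-r^{*}$ (reducing to $m\ge2-\frac1n$), and is also $<r^{*}-\frac{1}{m+1}=\frac{n-m^{2}}{m(m+1)(n-1)}$ because $\frac{m-1}{m(n-1)}<\frac{n-m^{2}}{m(m+1)(n-1)}$ whenever $n>2m^{2}-1$; so the whole ball about $r^{*}$ lies in the required interval, and this is precisely where $n>2m^{2}$ is used. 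Then I substitute $t=\frac1m-r$: since $mt=1-mr$, the quantity $f_{m,n}(t)=\binom{n}{m}(1-mr)^{m-1}(mr)^{n-m}$ is the $k=m$ term, and the maximiser $t_{0}=\frac{m-1}{m(n-1)}$ of $f_{m,n}$ corresponds to $r=r^{*}$, so $|t-t_{0}|=|r-r^{*}|$. The hypothesis of part~(c) is then literally the hypothesis of part~(c) of the second lemma, which yields $f_{m,n}(t)>\lambda f_{m,n}(t_{0})=\lambda\,n\,a_{m,n}$; since the other summands of $\bar\chi$ are nonnegative, $\frac1n\bar\chi(n,\tfrac{1-r}{2})>\lambda a_{m,n}$.

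I do not expect any step to be genuinely difficult; the proposition is bookkeeping on top of the two calculus lemmas. The points needing a little care are the crude estimate of the lower-order summands $\sum_{k<m}$ in part~(a) — checking that the prefactor $e\,n^{m-1}$ of $b_{m,n}$, and not a larger multiple, already suffices, which works because there are only $m-1<n$ of them, each bounded by $e\,n^{m-1}(\frac{m}{m+1})^{n-1}$ — together with the two endpoint comparisons in part~(c), which are where the hypotheses $m^{2}<n$ and $n>2m^{2}$ get consumed.
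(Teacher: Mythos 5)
Your proposal is correct and follows essentially the same route as the paper: isolate the $k=m$ summand of the Theorem A1 formula on $r\in\left(\frac1{m+1},\frac1m\right]$, identify it with $f_{m,n}\!\left(\frac1m-r\right)$ so the two calculus lemmas give the maximiser $r^{*}=\frac{n-m}{m(n-1)}$ (with value $n\,a_{m,n}$), part (c) of the lemma, and the limit in (d), while the lower-order terms $k\le m-1$ are bounded crudely by $e\,n^{m-1}\left(\frac m{m+1}\right)^{n-1}$-type estimates. The only (immaterial) difference is bookkeeping: the paper bounds the whole tail sum by $b_{m,n}$ via $\sum_{k\le m-1}n^{k}/k!\le e\,n^{m-1}$, whereas you bound each term and multiply by $m-1<n$, getting $n\,b_{m,n}$ — both suffice after dividing by $n$, and your endpoint checks consuming $m^{2}<n$ and $n>2m^{2}$ match the paper's.
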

\begin{proof}
    Let $r \in \left( \frac1{m+1}, \frac1m \right]$ and also write $r = \frac1m - t$, with $t \in \left[ 0, \frac1{m(m+1)} \right]$. Then we may rewrite the normalised expected Euler characteristic as follows:
    \begin{align*}
        \bar \chi\left(n, \frac{1-r}2 \right) =& \sum_{k=1}^m \binom nk (1-kr)^{k-1} (kr)^{n-k} \\
        =& \sum_{k=1}^m \binom nk \left( 1 - \frac km + kt \right)^{k-1} \left( \frac km - kt \right)^{n-k}
    \end{align*}
    We now claim that the $k=m$ term is the dominant one among the above summands. As such, we split the above sum as:
    \[ \bar \chi\left(n, \frac{1-r}2 \right) = f_{m,n}(t) + E \]
    where
    \begin{align*}
        f_{m, n}(t) =& \binom nm (mt)^{m-1}(1-mt)^{n-m},\\
        E =& \sum_{k=1}^{m-1} \binom nk \left( 1 - \frac km + kt \right)^{k-1} \left( \frac km - kt \right)^{n-k}
    \end{align*}
    Since $m < \sqrt{n}$, we have $s_{m,n} = \frac1{n-1}(1-\frac1m) < \frac1{m(m+1)}$. Therefore, the previous Lemma tells us that $f_{m, n}(t)$ achieves (global) maximum at $\tilde s \in \left(0, \frac1{m(m+1)}\right]$, with the maximum value given by:
    \[ f_{m, n}(\tilde s) = n \cdot a_{m, n} \text{, where }  a_{m,n} = \binom nm \frac{(m-1)^{m-1}(n-m)^{n-m}}{n(n-1)^{n-1}} \]
    We also bound $E$ as follows, using the inequality $\frac m{m+1} < 1-mt \le 1$:
    \begin{align*}
        E =& \sum_{k=1}^{m-1} \binom nk \left( 1 - \frac km (1-mt) \right)^{k-1} \left( \frac km (1-mt) \right)^{n-k} \\
        \le & \sum_{k=1}^{m-1} \binom nk \left( 1 - \frac1{m+1} \right)^{k-1} \left( 1 - \frac 1m \right)^{n-k} \\
        \le & \sum_{k=1}^{m-1} \frac{n^k}{k!} \left( 1 - \frac1{m+1} \right)^{n-1} \\
        \le & e n^{m-1} \left( 1 - \frac1{m+1} \right)^{n-1}
    \end{align*}
    This shows (a). Now (b) follows from the previous Lemma and the fact that $(1-\frac 1{m+1})^n$ term causes exponential decay for $b_{m, n}$. 
    
    (c) follows from (c) of the previous Lemma. We additionally impose the condition $n > 2m^2$, so that the endpoints of $t$ satisfying the condition fall in the interval $t \in \left[ 0, \frac1{m(m+1)} \right)$.
\end{proof}

The following yields Theorem A2.

\begin{prop}\label{main prop 1}
    Let $m\ge 2$, $\epsilon>0$. The following holds for sufficiently large $n$:
    \begin{align*}
         r \in \left[ \alpha^-, \alpha^+ \right] \implies \frac1n \bar \chi\left(n, \frac{1-r}2\right) \in \bigg[ (1-\epsilon) \omega_m, (1+\epsilon) \omega_m \bigg]
    \end{align*}
    where
    \begin{align*}
        \alpha^{\pm} =& \frac{n-m}{(n-1)m}\left( 1 \pm \frac{\epsilon \sqrt{m-1}}{n} \right), \quad \omega_m = \frac{(m-1)^{m-1}}{m! e^{m-1}}
    \end{align*}
\end{prop}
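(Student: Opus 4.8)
The plan is to read both inequalities off Proposition~\ref{spike prop}: the lower bound from part~(c) used with parameter $\lambda = 1-\epsilon$, and the upper bound from the chain $\tfrac1n\bar\chi \le M \le a_{m,n}+b_{m,n}$ of part~(a) together with the limits of part~(b). Take $n$ large enough that $n > 2m^2$, so both parts apply. We may assume $\epsilon \in (0,1)$: for $\epsilon \ge 1$ the lower bound is vacuous since $\chi$ of a nerve of circular arcs is always $0$ or $a+1 \ge 0$ by the classification of \cite{ncca}, while the upper bound then follows because $\bar\chi(n,\tfrac{1-r}{2})$ is nonnegative and, on the part of $[\alpha^-,\alpha^+]$ to the right of $\tfrac1m$, is $O(n^{m-1}c^n)$ for some $c<1$ (the dominant $k=m$ term is absent from the defining sum once $r > \tfrac1m$).

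Next I check that $[\alpha^-,\alpha^+]$ lies inside the open interval of Proposition~\ref{spike prop}(c) taken with $\lambda = 1-\epsilon$, namely $\{\,r : |r-c| < R\,\}$ where $c = \tfrac{n-m}{(n-1)m}$ and $R = \tfrac{\epsilon\sqrt{(m-1)(n-m)}}{m(n-1)^{3/2}}$. Since $\alpha^\pm = c\bigl(1 \pm \tfrac{\epsilon\sqrt{m-1}}{n}\bigr)$ is symmetric about $c$ with half-width $h = c\cdot\tfrac{\epsilon\sqrt{m-1}}{n}$, this containment is exactly $h < R$, which after cancelling the common factor $\tfrac{\epsilon\sqrt{m-1}}{m}$ reduces to $n > \sqrt{(n-m)(n-1)}$, i.e.\ to $(m+1)n > m$ --- true for all $m,n \ge 1$. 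Together with the last sentence of Proposition~\ref{spike prop}(c) this gives $[\alpha^-,\alpha^+] \subset (\tfrac1{m+1},\tfrac1m]$; moreover $\tfrac1m - \alpha^+ = \tfrac{\sqrt{m-1}}{(n-1)m}\bigl(\sqrt{m-1} - \tfrac{(n-m)\epsilon}{n}\bigr) > 0$ since $\tfrac{(n-m)\epsilon}{n} < \epsilon < 1 \le \sqrt{m-1}$, so in fact $[\alpha^-,\alpha^+] \subset (\tfrac1{m+1},\tfrac1m)$.

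Now the upper bound: Proposition~\ref{spike prop}(a) gives $\tfrac1n\bar\chi(n,\tfrac{1-r}{2}) \le M \le a_{m,n} + b_{m,n}$ for every $r \in [\alpha^-,\alpha^+]$, and $a_{m,n} + b_{m,n} \to \omega_m$ by part~(b), so the right-hand side is $< (1+\epsilon)\omega_m$ once $n$ is large. For the lower bound, Proposition~\ref{spike prop}(c) with $\lambda = 1-\epsilon$ gives $\tfrac1n\bar\chi(n,\tfrac{1-r}{2}) > (1-\epsilon)\,a_{m,n}$ for every $r \in [\alpha^-,\alpha^+]$, so it suffices to show $a_{m,n} \ge \omega_m$ for $n$ large. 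Writing $a_{m,n} = \binom nm \tfrac{(m-1)^{m-1}(n-m)^{n-m}}{n(n-1)^{n-1}} = \omega_m\, e^{m-1}\prod_{i=1}^{m-2}\bigl(1 - \tfrac{i}{n-1}\bigr)\,\bigl(1 - \tfrac{m-1}{n-1}\bigr)^{n-m}$ and expanding the logarithm of the last two factors, the $O(1/n)$ terms combine to $\tfrac{m-1}{2(n-1)} > 0$, so $a_{m,n} = \omega_m\bigl(1 + \tfrac{m-1}{2(n-1)} + O(n^{-2})\bigr) > \omega_m$ for $n$ large (for $m = 2$ this is just $(1-\tfrac1{n-1})^{n-2} > e^{-1}$). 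Hence $\tfrac1n\bar\chi(n,\tfrac{1-r}{2}) > (1-\epsilon)\omega_m$ on $[\alpha^-,\alpha^+]$, which with the upper bound completes the proof.

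The one delicate point is the lower bound: the target value $(1-\epsilon)\omega_m$ is exactly the limit of the estimate $(1-\epsilon)a_{m,n}$ supplied by Proposition~\ref{spike prop}(c), because the half-width $h$ of $[\alpha^-,\alpha^+]$ agrees with the radius $R$ to leading order precisely when $\lambda = 1-\epsilon$, leaving no ``soft'' slack. One therefore cannot merely invoke $a_{m,n} \to \omega_m$; one has to pin down the direction of convergence, i.e.\ the sign of the $O(1/n)$ correction to $a_{m,n}$. The rest is routine manipulation of the explicit formulas already in hand.
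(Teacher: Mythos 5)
Your proof is correct and follows essentially the same route as the paper: verify that $[\alpha^-,\alpha^+]$ is contained in the interval of Proposition~\ref{spike prop}(c) with $\lambda=1-\epsilon$, and combine this with the bounds of parts (a)--(b). You additionally pin down the one point the paper's one-line proof leaves implicit --- that $a_{m,n}$ converges to $\omega_m$ \emph{from above} (correction $+\tfrac{m-1}{2(n-1)}+O(n^{-2})$), which is genuinely needed for the lower bound because the interval containment is asymptotically tight and leaves no slack --- along with the $\epsilon\ge 1$ case and the strict inclusion $[\alpha^-,\alpha^+]\subset\left(\tfrac1{m+1},\tfrac1m\right)$.
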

\begin{proof}
    This follows directly from the previous Proposition. $\alpha^{\pm}$ are slight relaxations of the interval in (c), where we set $\lambda = 1-\epsilon$:
    \begin{align*}
        & \left[ \frac{n-m}{(n-1)m} - \epsilon R_1 , \frac{n-m}{(n-1)m} + \epsilon R_1\right] \supseteq \left[ \frac{n-m}{(n-1)m} (1 - \epsilon R_2) , \frac{n-m}{(n-1)m} (1 + \epsilon R_2) \right] \\
        & \text{where } R_1 = \frac{\epsilon \sqrt{(m-1)(n-m)} }{m(n-1)^{3/2}}, \quad R_2 = \frac{\sqrt{m-1}}{n}
    \end{align*}
\end{proof}

\section{Random homotopy types}

\subsection{Constraints on homotopy types}

Let $\Unif_n = \{ i/n \>|\> i=0, 1, \ldots n-1 \} \subset \Circle$ be the set of $n$ equally spaced points. Let $\mathcal N(n, k)$ be the nerve complex on $\Unif_n$ defined by the open cover consisting of closed intervals $[i/n, (i+k)/n]$.

\begin{lem}
    We have that:
    \begin{align*}
        \Cech(\Unif_n, r) = \Cech \bigg( \Unif_n, \frac{\lfloor 2rn \rfloor }{2n} \bigg) = \mathcal N(n, \lfloor 2rn\rfloor)
    \end{align*}
\end{lem}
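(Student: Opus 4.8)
The statement is really two equalities, and I would prove them separately, working always with the intrinsic (arc) metric on $\Circle = [0,1]/\!\sim$. The second equality, $\Cech(\Unif_n, k/(2n)) = \mathcal N(n,k)$ for an integer $k$, is essentially definitional once one unwinds what the Čech complex is. A point $i/n \in \Unif_n$ gets the closed arc of radius $k/(2n)$ centered at it, namely $[(i-k/2)/n,\ (i+k/2)/n]$; when $k$ is even this has the same endpoints (up to relabeling the center) as the interval $[i/n,(i+k)/n]$ used to define $\mathcal N(n,k)$, so the two covers of $\Unif_n$ are identical and hence have the same nerve. When $k$ is odd the centered closed arcs are half-integer-shifted, but a finite subcollection of them has nonempty intersection if and only if the corresponding collection of length-$k/n$ intervals $[i/n,(i+k)/n]$ does — the point is that any nonempty intersection of closed arcs on $\Circle$ is again a closed arc whose endpoints lie in $\frac1{2n}\ZZ$, and the combinatorial pattern of which subfamilies intersect depends only on the sorted cyclic positions of the centers and the common radius, not on an overall shift. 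So the nerve is unchanged, giving $\Cech(\Unif_n, k/(2n)) = \mathcal N(n,k)$. I would phrase this via the standard fact that for arcs on the circle, a family has a common point iff every pair does iff the "longest gap" left uncovered by the complementary arcs is consistent — but the cleanest route is just: intersection of closed arcs is a closed arc, and track endpoints in $\frac1{2n}\ZZ$.

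For the first equality, $\Cech(\Unif_n, r) = \Cech(\Unif_n, \lfloor 2rn\rfloor/(2n))$, the key observation is that the Čech complex only changes at radii where the intersection pattern of the closed arcs changes, and for the \emph{equally spaced} point set $\Unif_n$ all pairwise distances — and more generally all the relevant "does this subfamily of arcs intersect" thresholds — are integer multiples of $1/(2n)$ (a subfamily of closed $r$-arcs centered at points of $\Unif_n$ intersects iff $2r$ is at least the largest cyclic gap among those centers, and that gap is a multiple of $1/n$; the factor $2$ comes from two arcs of radius $r$ reaching across a gap). Hence for $r$ in a half-open interval $[\,j/(2n),\,(j+1)/(2n))$ the complex is constant, and its value is the one attained at the left endpoint $j/(2n)$ with $j = \lfloor 2rn\rfloor$. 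I would make this precise by noting that $\Cech(\Unif_n,r) \subseteq \Cech(\Unif_n,r')$ for $r \le r'$ (monotonicity of the filtration), so it suffices to check that no new simplex appears as $r$ increases strictly within such an interval; that follows from the threshold-is-a-multiple-of-$1/(2n)$ claim, and one must be slightly careful that the Čech complex uses \emph{closed} arcs so the relevant thresholds are attained (i.e. the filtration is right-continuous in the sense that gives the left endpoint), which is exactly why $\lfloor\,\cdot\,\rfloor$ and not $\lceil\,\cdot\,\rceil$ appears.

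The main obstacle — and the only place requiring genuine care — is the claim that a finite family of closed arcs of radius $r$ centered at points $C \subseteq \Unif_n$ has nonempty common intersection precisely when $2r \ge g(C)$, where $g(C)$ is the maximal cyclic gap between consecutive points of $C$ (equivalently $1$ minus the diameter-type quantity). One direction is a short covering argument: the complements are open arcs of radius $\tfrac12 - r$, and they fail to cover $\Circle$ iff $2(\tfrac12-r) \le$ (length of the complementary arcs' reach) $\ldots$; in fact this is exactly the $Q_k$ / De Morgan computation already used in the excerpt's Proposition and Stevens' theorem, so I would cite that rather than redo it. Given this, since $g(C)$ for $C \subseteq \Unif_n$ is always in $\frac1n\ZZ$, the intersection pattern — hence the whole nerve — is locally constant in $r$ off the grid $\frac1{2n}\ZZ$ and left-continuous onto it, which yields both the "round down $2rn$" equality and the identification with $\mathcal N(n,k)$. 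I would organize the write-up as: (1) recall closed-arc intersection $\Leftrightarrow$ maximal-gap condition (citing the earlier De Morgan argument); (2) deduce the pattern depends only on $\lfloor 2rn\rfloor$ for centers in $\Unif_n$; (3) match the $k$ even/odd cover of $\mathcal N(n,k)$ to the centered closed arcs of radius $k/(2n)$.
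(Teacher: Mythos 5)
Your overall strategy (thresholds lie on the $\tfrac1{2n}$-grid, the intersection condition is closed so the left endpoint is attained, then identify the grid value with $\mathcal N(n,k)$) is the natural one — the paper itself states this lemma without proof — but the key quantitative claim you base the first equality on is stated incorrectly. You assert that closed $r$-arcs centered at $C\subseteq \Unif_n$ have a common point iff $2r\ge g(C)$, the maximal cyclic gap of $C$, ``the factor $2$ coming from two arcs reaching across a gap''. That is the condition for consecutive arcs to meet \emph{pairwise} across each gap (i.e.\ for the union to behave well), not for a common point of the whole subfamily. Counterexample: $C=\{0,\tfrac13,\tfrac23\}$ and $r=\tfrac16$ give $g(C)=\tfrac13=2r$, yet no point of $\Circle$ is within $\tfrac16$ of all three centers. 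The correct criterion — which is exactly what the De Morgan/coverage argument you propose to cite yields — is $2r\ge 1-g(C)$: the common intersection is nonempty iff the open arcs of radius $\tfrac12-r$ centered at $C$ fail to cover $\Circle$, iff some gap is at least $1-2r$, iff $C$ is contained in a closed arc of length at most $2r$ (whose midpoint then witnesses the intersection). Fortunately your conclusions survive the correction: for $C\subseteq\Unif_n$ the corrected threshold $(1-g(C))/2$ still lies in $\tfrac1{2n}\ZZ$ and is attained because the arcs are closed, so $\Cech(\Unif_n,r)$ is constant on each interval $[\,j/(2n),(j+1)/(2n))$ and equals its value at the left endpoint, giving $\Cech(\Unif_n,r)=\Cech(\Unif_n,\lfloor 2rn\rfloor/(2n))$. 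You should also delete the side remark that a nonempty intersection of closed arcs is again a closed arc: this is false once arcs are longer than half the circle (two such arcs can meet in two components), though nothing in the repaired argument uses it.

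For the second equality you can avoid the even/odd case split entirely: the arc $[i/n,(i+k)/n]$ defining $\mathcal N(n,k)$ is precisely the closed arc of radius $k/(2n)$ centered at $i/n$ rotated by $+k/(2n)$, and this \emph{same} rotation is applied to every member of the family, vertex by vertex. Since a rotation is a homeomorphism of $\Circle$, intersection patterns are preserved, so the two nerves have identical simplices on the identical vertex set, i.e.\ $\Cech(\Unif_n,k/(2n))=\mathcal N(n,k)$ — no parity discussion, no tracking of endpoints needed. With the corrected intersection criterion and this rotation argument, your proof plan is complete and correct.
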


The following result is from \cite{ncca}:
\begin{prop}
    \begin{align*}
        \mathcal N(n,k) \simeq \begin{cases} 
        \vee^{n-k-1} \sphere^{2l} & \text{if } \frac kn = \frac l{l+1} \\
        \sphere^{2l+1} & \text{if } \frac kn \in \left( \frac l{l+1}, \frac{l+1}{l+2} \right)
        \end{cases}
    \end{align*}
    Note that if $(k, n) = (jl, j(l+1))$, then $n-k-1 = j-1$, so that $\vee^{n-k-1}\sphere^{2l} = \vee^{j-1} \sphere^{2l}$.
\end{prop}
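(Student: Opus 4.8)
The plan is to first forget the geometry, re-express $\mathcal N(n,k)$ as a purely combinatorial complex, and then extract its homotopy type by discrete Morse theory. Unwinding the nerve condition, the closed arc $[i/n,(i+k)/n]$ contains a real point $x\in\Circle$ precisely when $i$ lies in the cyclic interval $[xn-k,\,xn]$ of $\ZZ/n$, so a set $S\subseteq\ZZ/n$ of arc-indices has nonempty common intersection iff $S$ is contained in a cyclic window of $k+1$ consecutive vertices; equivalently, iff the longest cyclic gap of $S$ (a maximal run of vertices outside $S$) has length at least $n-k-1$. Thus $\mathcal N(n,k)$ is the abstract complex on $\ZZ/n$ whose faces are exactly the subsets admitting such a gap. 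Before the main argument I would dispose of the endpoints of the range: if $k=n-1$ every set is a face, so $\mathcal N(n,k)$ is a full simplex (contractible $=\vee^0\sphere^{2(n-1)}$, i.e.\ the transition case $l=n-1$); if $k=0$ the faces are just the vertices, so $\mathcal N(n,k)$ is $n$ disjoint points $=\vee^{n-1}\sphere^0$; and if $0<k/n<\tfrac12$ (the interior of the $l=0$ range) every arc is shorter than a semicircle, any nonempty intersection of such arcs is a single sub-arc, consecutive arcs overlap, so the cover is good and the nerve lemma gives $\mathcal N(n,k)\simeq\Circle=\sphere^1$.

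For the remaining range $l\ge 1$ (equivalently $k/n\ge\tfrac12$) I would build an acyclic matching on $\mathcal N(n,k)$ and read the answer off its critical cells. The target is a Morse matching realising the minimal CW model of the asserted space: after cancelling the empty set against one vertex, the remaining critical cells should be either a single cell of dimension $2l+1$ when $k/n\in(\tfrac{l}{l+1},\tfrac{l+1}{l+2})$, or exactly $n-k-1$ cells of dimension $2l$ when $k/n=\tfrac{l}{l+1}$, with nothing in any intermediate dimension. A natural construction is an element-matching scheme iterated over the vertices of $\ZZ/n$ in a fixed cyclic order: pair a face $S$ not containing the current vertex $v$ with $S\cup\{v\}$ whenever both lie in the complex, then restrict to the faces left uncancelled and repeat with the next vertex; the surviving faces are those rigid under every such move, and from the gap description one expects these to be the near-equally-spaced subsets of the predicted size. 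Given such a matching, $\mathcal N(n,k)$ has a CW model with a single $0$-cell and otherwise only cells of dimension $2l$ (transition) or $2l+1$ (open); in particular it is simply connected, its reduced homology is concentrated in that single degree with the stated rank, and a Hurewicz/Whitehead argument then identifies it with $\vee^{n-k-1}\sphere^{2l}$, respectively $\sphere^{2l+1}$. This also recovers the paper's dichotomy: odd-dimensional reduced homology appears only in the open case, where its rank is pinned to $1$, whereas in the transition case the rank $n-k-1$ is unconstrained.

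The real obstacle is the acyclic-matching construction together with the exact count of critical cells. Two features make it delicate. First, $\mathcal N(n,k)$ carries a transitive cyclic symmetry, and symmetric matchings tend to create directed cycles in the modified Hasse diagram, so the symmetry must be broken by hand and acyclicity re-verified after each round of the iteration. Second, the whole statement turns on how the top-dimensional critical count drops from $n-k-1$ (at $k/n=\tfrac{l}{l+1}$) to $1$ (once $k/n$ enters the open interval); pinning this down amounts to a careful enumeration of the inclusion-minimal ``all gaps $\le n-k-2$'' subsets, of which of them get matched, and of how this enumeration degenerates exactly at the rational points $l/(l+1)$. If the explicit matching proves unwieldy I would fall back on an inductive Mayer--Vietoris argument: cover $\mathcal N(n,k)$ by the closed stars of two adjacent windows, identify the pieces and their intersection with complexes of the same type on fewer vertices (or relate $\on{del}_v$ and $\on{lk}_v$ of a vertex to $\mathcal N(n-1,k-1)$-type complexes), and induct on $n$; alternatively one can relate $\mathcal N(n,k)$ to the (flag) Vietoris--Rips complexes of evenly spaced circle points studied in \cite{vr_circle}, which carry the same homotopy types, and quote that classification.
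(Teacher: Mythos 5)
Your combinatorial reformulation of $\mathcal N(n,k)$ (a subset of $\ZZ/n$ is a face iff it fits in a cyclic window of $k+1$ consecutive vertices, equivalently its complement contains a run of $n-k-1$ consecutive vertices) is correct, and your treatment of the boundary cases $k=0$, $k=n-1$ and of the range $0<k/n<\tfrac12$ via the nerve lemma is fine. But note first that the paper does not prove this proposition at all: it is quoted from \cite{ncca}, so the comparison here is really between your plan and the published proof in that reference, which does proceed along discrete-Morse/combinatorial lines similar to what you outline. The problem is that for the range $k/n\ge\tfrac12$ — which is the entire content of the statement for $l\ge 1$, including the delicate dichotomy between one critical $(2l+1)$-cell on the open intervals and exactly $n-k-1$ critical $2l$-cells at the rational points $l/(l+1)$ — you do not construct the matching. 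You describe what it should accomplish, name the two obstacles (breaking the cyclic symmetry while preserving acyclicity, and enumerating the surviving cells), and stop there; as you yourself say, this is ``the real obstacle''. Since the Hurewicz/Whitehead endgame is routine once the critical cells are known, the unconstructed matching is precisely the theorem, so as written the proposal is a strategy sketch with the central step missing.

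The fallbacks do not close the gap either. Quoting \cite{vr_circle} is not available: that paper concerns $\operatorname{VR}(\sphere^1,r)$ and $\Cech(\sphere^1,r)$ for the \emph{whole} circle, not finite evenly spaced subsets; and the Vietoris--Rips complexes of evenly spaced points are flag complexes with transition thresholds $k/n=l/(2l+1)$, genuinely different complexes from $\mathcal N(n,k)$ (e.g.\ for $n=6$, $k=2$ the set $\{0,2,4\}$ is a Rips face but not a nerve face), whose transitions occur at $l/(l+1)$. The finite classification you would want to quote is exactly the result of \cite{ncca} that you are being asked to prove, so that route is circular. The Mayer--Vietoris / vertex-deletion induction is likewise only named: deletions and links of vertices in $\mathcal N(n,k)$ are not obviously complexes of the same family, and identifying them is where the combinatorial work of \cite{ncca} (winding fractions, cyclic dismantling) actually lives. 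So the approach is viable in principle — it mirrors the literature — but a genuine gap remains at the acyclic matching and the critical-cell count.
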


Using the above, we easily show that:
\begin{prop}
    Given $r \in (0, 1/2)$, the following two subsets of $\mathbb{Z}^3$ are equal:
    \begin{align*}
        \bigg\{ (n,a,b) \>\bigg|\> \Cech(\Unif_n, r) \simeq \vee^a \sphere^{2b} \bigg\} = 
        \bigg\{ ((a+1)(b+1), a, b) \>\bigg|\> b+1 \le \rr^{-1} , \> a+1 \le \frac1{1-(b+1) \rr} \bigg\}
    \end{align*}
    where $\rr = 1-2r$. In particular, if $\rr^{-1} \in [k, k+1)$, then $b \in \{0, 1, 2, \ldots k-1\}$ and we have $a \le k-1$ when $b \le k-2$. 
\end{prop}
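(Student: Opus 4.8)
The plan is to concatenate the two results immediately preceding this proposition. The Lemma gives $\Cech(\Unif_n, r) = \mathcal N(n, k)$ with $k = \lfloor 2rn\rfloor$, and since $r \in (0,1/2)$ we have $0 \le k \le n-1$, so $k/n \in [0,1)$ and exactly one of the two cases in the cited classification of $\mathcal N(n,k)$ applies. The remaining work is purely arithmetic: turn the data produced by the classification into the stated inequalities on $a$, $b$, $\rr$.

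First I would dispose of the odd-sphere case. If $k/n \in (\tfrac{l}{l+1}, \tfrac{l+1}{l+2})$ then $\mathcal N(n,k) \simeq \sphere^{2l+1}$, whose reduced homology is concentrated in the odd degree $2l+1$; since $\vee^a\sphere^{2b}$ has reduced homology in the even degree $2b$ when $a \ge 1$, and is contractible when $a = 0$, an odd sphere is never homotopy equivalent to any $\vee^a\sphere^{2b}$. Hence $\Cech(\Unif_n,r) \simeq \vee^a\sphere^{2b}$ can come only from the first case, $k/n = \tfrac{l}{l+1}$, where $\mathcal N(n,k) \simeq \vee^{\,n-k-1}\sphere^{2l}$; matching homology then forces $l = b$ and $n-k-1 = a$ (this pins $(a,b)$ down once $a \ge 1$; the degenerate $a=0$ case, in which $\vee^0\sphere^{2b}$ is a point for every $b$, and the boundary radii with $2rn \in \ZZ$, are the only places where one has to be careful about the exact bookkeeping). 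Solving $k(b+1) = nb$ together with $a = n-k-1$ yields $n = (a+1)(b+1)$ and $k = b(a+1)$; conversely these two identities make $k/n = \tfrac{b}{b+1}$ and the classification then returns $\vee^{\,n-k-1}\sphere^{2b} = \vee^a\sphere^{2b}$.

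It then remains to translate the floor condition. With $n = (a+1)(b+1)$, the requirement $\lfloor 2rn\rfloor = b(a+1)$ reads $b(a+1) \le 2r(a+1)(b+1) < b(a+1)+1$. Dividing the left inequality by $a+1$ gives $\tfrac{b}{b+1} \le 2r$, i.e. (using $\rr = 1-2r$) $\rr \le \tfrac1{b+1}$, i.e. $b+1 \le \rr^{-1}$. Rearranging the right inequality gives $\rr(a+1)(b+1) > a$, equivalently $a+1 \le \tfrac1{1-(b+1)\rr}$ (the right-hand side being read as $+\infty$ when $(b+1)\rr = 1$). These are precisely the two conditions defining the second set, so the two sets coincide. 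I do not expect any genuine obstacle: all the substance sits in the cited Lemma and the cited classification, and the rest is elementary manipulation; the only delicate point, as noted, is the $a = 0$ / boundary-$r$ bookkeeping.

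Finally, for the ``in particular'' clause: if $\rr^{-1} \in [k,k+1)$ then $b+1 \le \rr^{-1} < k+1$ forces the integer $b+1$ to be at most $k$, so $b \in \{0,1,\dots,k-1\}$; and if in addition $b \le k-2$, then $b+1 \le k-1$ while $\rr \le \tfrac1k$, so $1-(b+1)\rr \ge 1-\tfrac{k-1}{k} = \tfrac1k$, whence $a+1 \le \tfrac1{1-(b+1)\rr} \le k$, i.e. $a \le k-1$.
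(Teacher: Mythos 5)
Your proposal is correct and follows essentially the same route as the paper: reduce to $\mathcal N(n,\lfloor 2rn\rfloor)$ via the Lemma, use the classification to force $(\lfloor 2rn\rfloor, n) = (b(a+1),(a+1)(b+1))$, and translate the floor condition into the two inequalities on $\rr$ (your extra care about the odd-sphere exclusion and the $a=0$/boundary bookkeeping is detail the paper leaves implicit, including the $<$ versus $\le$ edge case, which is already present in the paper's own statement).
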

\begin{proof}
To have $\mc N(n, \lfloor 2rn \rfloor) = \Cech(\Unif_n, r) \simeq \vee^a \sphere^{2b}$, we see from the previous Proposition that the condition is given by $(\lfloor 2rn\rfloor , n) = ((a+1)b, (a+1)(b+1))$. This determines $n$ from $(a,b)$. The condition on $\lfloor 2rn\rfloor$ is then:
\begin{align*}
    & (a+1)b \le 2r(a+1)(b+1) < (a+1)b + 1 \\
    \iff & \rr(b+1) \le 1, \> a < \rr(a+1)(b+1) \\
    \iff & (b+1) \le \rr^{-1}, \> (a+1) < (1 - \rr (b+1))^{-1}
\end{align*}
as desired.
\end{proof}
\textbf{Remark.} At fixed $l$, let $k=b+k_0$. Then $\frac1{1-(b+1)/k} = 1 + \frac{b+1}{k_0-1}$ and changing $k_0$ by a single value can have a heavy effect on the upper bound.

\begin{prop}\label{homotopy type constraint}
    Let $r \in (0, 1/2)$ and $n$ be given; define $\tilde r = 1-2r$ and let $k = \lfloor \tilde{r}^{-1}\rfloor$. We have the following relations between subsets of $\mathbb{Z}^2$:
    \begin{align*}
        & \bigg\{ (a,b) \>\bigg|\> \Cech(\mathbf{Y} , r) \simeq \vee^a \sphere^{2b}, \> \mathbf{Y} \subset \sphere^1, \> \# \mathbf{Y} = n \bigg\} \\
        =& \bigg\{ (a,b) \>\bigg|\> \Cech(\Unif_m , r) \simeq \vee^a \sphere^{2b}, \> m \le n \bigg\} \\
        \subseteq & \bigg\{ (a,b) \>\bigg|\> b+1 \le k, \> a+1 \le \min\bigg( \frac n{b+1}, \frac1{1-(b+1)\tilde r}\bigg) \bigg\} \\
        \subseteq & \bigg\{ (a,b) \>\bigg|\> b+1 \le k-1, \> a+1 \le \frac k{k-b-1} \bigg\} \cup \bigg\{ (a, k-1) \>\bigg|\> a+1 \le \frac nk \bigg\}
    \end{align*}
    where in the final expression, $k/0 = \infty$ by convention.
\end{prop}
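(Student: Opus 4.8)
The plan is to establish the chain of set relations from left to right, where each step has a clear source. First, the equality of the first two sets follows from the discretisation Lemma, which says $\Cech(\mathbf{Y}, r) = \mathcal{N}(\#\mathbf{Y}, \lfloor 2rn\rfloor)$ depends only on $\#\mathbf{Y}$ and not on the precise location of the points, once we round the radius. Actually the subtlety is that an arbitrary finite $\mathbf{Y}\subset\sphere^1$ need not be equally spaced, so one should note: the nerve of arcs of a common radius on the circle built from $\mathbf{Y}$ with $\#\mathbf{Y}=n$ is always homotopy equivalent to $\mathcal{N}(m, k)$ for some $m \le n$ and appropriate $k$ — this is exactly the classification machinery of \cite{ncca}, where the homotopy type of such a nerve is governed by the "combinatorial" data of gaps. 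So I would cite the previous Proposition (the one computing $\mathcal{N}(n,k)$) together with the cyclic-interval description to say that realisable $(a,b)$ for a set of size $\le n$ are precisely those realisable by some $\Unif_m$ with $m\le n$.

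For the first inclusion, I would invoke the preceding Proposition on $\Unif_m$: to have $\Cech(\Unif_m, r)\simeq \vee^a\sphere^{2b}$ we need $m = (a+1)(b+1)$, together with $b+1 \le \rr^{-1}$ and $a+1 \le (1-(b+1)\rr)^{-1}$ (with $\rr = 1-2r$). Since $m\le n$ and $m = (a+1)(b+1)$, the constraint $m\le n$ reads $a+1 \le n/(b+1)$; combining with the two combinatorial constraints, and noting $b+1\le\rr^{-1}$ is equivalent to $b+1\le k = \lfloor\rr^{-1}\rfloor$ (an integer bound on an integer), gives exactly the third displayed set.

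For the final inclusion I would split on the value of $b+1$. If $b+1 \le k-1$, then $(b+1)\rr < (b+1)/k \le (k-1)/k < 1$, so $1 - (b+1)\rr > 1 - (b+1)/k = (k-b-1)/k$, hence $(1-(b+1)\rr)^{-1} < k/(k-b-1)$; combined with integrality this yields $a+1 \le k/(k-b-1)$ (rounding down; one must check the bound is not itself an integer, or simply keep the weaker $\le$). If instead $b+1 = k$, the term $k/(k-b-1)$ is $k/0=\infty$ by convention and the only surviving bound is $a+1 \le n/(b+1) = n/k$, giving the second piece of the union. The case $b+1 > k$ is excluded by the third set.

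The main obstacle I anticipate is the very first step: justifying that an \emph{arbitrary} $\mathbf{Y}\subset\sphere^1$ of size $n$ yields a nerve homotopy equivalent to that of some $\Unif_m$ with $m\le n$. This is not literally the statement of the cited discretisation Lemma (which is about $\Unif_n$), so I would need to either invoke directly the homotopy classification of nerves of circular arcs from \cite{ncca} — which shows the homotopy type depends only on how many arcs are "redundant" and the resulting cyclic covering pattern, data that is always matched by some equally spaced configuration of $m\le n$ points — or give a short deformation-retraction argument collapsing redundant arcs. Everything after that is bookkeeping with the inequalities $(b+1)\le k$ and the reciprocal estimates, which is routine.
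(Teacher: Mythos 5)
Your proposal is correct and follows essentially the same route as the paper: the equality is handled by citing the reduction in \cite{ncca} of an arbitrary finite subset of $\sphere^1$ to an equally spaced configuration of at most the same size, the first inclusion comes from the preceding Proposition together with $m=(a+1)(b+1)\le n$, and the last inclusion is the same case split $b+1\le k-1$ versus $b+1=k$ (your worry about strictness is harmless, since $\tilde r\le 1/k$ already gives $a+1\le(1-(b+1)\tilde r)^{-1}\le k/(k-b-1)$). The paper's own proof is just a terser version of exactly this argument.
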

\begin{proof}
    The first equality holds because for every $\mathbf{Y} \subset \sphere^1$, there exists $\mathbf{Y}' \subset \mathbf{Y}$ such that $\Cech(\mathbf Y, r) \simeq \Cech(\mathbf Y', r) \simeq \Cech(\Unif_m, r)$, where $m = \# \mathbf{Y}'$ \cite{ncca}. The first inclusion follows from the previous Proposition. The second inclusion follows from separating the two cases $b+1 < k$ and $b+1 = k$.
\end{proof}

\subsection{Probabilistic bounds}

For a topological space $K$, we define the following notation for probability:
\[ p(K, n, r) = \Prob[\Cech(\mathbf{X}_n, r) \simeq K] \]

We generally have the following:
\[ \bar\chi(n, r) = \EE[ \chi(\Cech(\mathbf X_n, r)) ] = \sum_{K} \chi(K) \cdot p(K, n, r) \]
where the sum is well-defined because there are only finitely many combinatorial structures that $\Cech(\mathbf X_n, r)$ can take. Furthermore if we let $k = \lfloor (1-2r)^{-1} \rfloor$, then Proposition \ref{homotopy type constraint} tells us that:
\[ \{ K \>|\> p(K, n, r) > 0 \} \subseteq \bigg\{ \vee^a \sphere^{2b} \>\bigg|\> b+1 \le k-1, a+1 \le \frac{k}{k-b-1} \bigg\} \cup \bigg\{ \vee^a \sphere^{2k-2} \>\bigg|\> a+1 \le \frac nk \bigg\} \]
From this we infer that\footnote{By convention, in the summation we only consider $a \le 0$ when $b=0$ and instead consider $a>0$ when $b>0$. This is so that the singleton set $\vee^a \sphere^{2b} = *$ is counted only once.}:
\begin{align*}
    \bar\chi(n, r) =& A_{<k} + A_{k} \\
    \text{where } A_{<k} =& \sum_{\substack{0 \le b \le k-2 \\ (a+1)(k-b-1) \le k}} (a+1) \cdot p(\vee^a \sphere^{2b}, n, r) \\
    A_{k} =& \sum_{1 < a+1 \le n/k} (a+1) \cdot p(\vee^a \sphere^{2k-2}, n, r)
\end{align*}
where we used $\chi(\vee^a \sphere^{2b}) = a+1$. Since sum of probabilities is $1$, applying the constraint $(a+1)(k-b-1) \le k$ implies that $A_{<k} \le k$. This implies the following:
\begin{prop}\label{main prop 2}
    The following holds:
    \begin{align*}
        A_{k} \le \bar \chi(n, r) \le k + A_{k}
    \end{align*}
    where
    \[ A_{k} = \sum_{1 < a+1 \le n/k} (a+1) \cdot p(\vee^a \sphere^{2k-2}, n, r) \]
\end{prop}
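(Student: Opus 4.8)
The plan is to read the statement off the decomposition $\bar\chi(n,r) = A_{<k} + A_k$ that was just derived, by pinning down the range of the auxiliary quantity $A_{<k}$; concretely I will show $0 \le A_{<k} \le k$, which delivers both inequalities at once.

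For the lower bound, every summand of $A_{<k}$ has the form $(a+1)\cdot p(\vee^a\sphere^{2b},n,r)$ with $a+1 \ge 1$ and $p(\cdot) \ge 0$, so $A_{<k} \ge 0$ and hence $A_k \le A_{<k} + A_k = \bar\chi(n,r)$. For the upper bound I would first argue that on the index set of $A_{<k}$ the multiplicity is controlled: since $0 \le b \le k-2$ we have $k-b-1 \ge 1$, so the constraint $(a+1)(k-b-1) \le k$ forces $a+1 \le k/(k-b-1) \le k$. Consequently
\[ A_{<k} \;\le\; k \sum_{\substack{0 \le b \le k-2 \\ (a+1)(k-b-1)\le k}} p(\vee^a\sphere^{2b},n,r). \]
To finish I would invoke that the events $\{\Cech(\mathbf X_n,r)\simeq \vee^a\sphere^{2b}\}$, ranged over the indexing convention, together with the events $\{\Cech(\mathbf X_n,r)\simeq\sphere^{2l+1}\}$, partition the sample space up to a null set: distinct bouquets of spheres are pairwise non-homotopy-equivalent, and Proposition \ref{homotopy type constraint} confines the law of $\Cech(\mathbf X_n,r)$ to exactly these types. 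Hence the displayed sum of probabilities is at most $1$, so $A_{<k} \le k$ and $\bar\chi(n,r) \le k + A_k$.

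There is no substantive obstacle here — the statement is essentially a repackaging of the expansion $\bar\chi(n,r) = \sum_K \chi(K)\,p(K,n,r)$ once one uses $\chi(\vee^a\sphere^{2b}) = a+1$, $\chi(\sphere^{2l+1}) = 0$, and the support constraint of Proposition \ref{homotopy type constraint}. The only point demanding a little care is the summation convention that makes the point space $\vee^0\sphere^{2b} = *$ be counted exactly once, so that the relevant probabilities genuinely add up to at most $1$.
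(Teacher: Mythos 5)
Your proof is correct and is essentially the paper's own argument: the paper likewise writes $\bar\chi(n,r)=A_{<k}+A_k$ via $\chi(\vee^a\sphere^{2b})=a+1$ (odd spheres contributing zero), notes $A_{<k}\ge 0$, and bounds $A_{<k}\le k$ by combining $(a+1)(k-b-1)\le k$ with the fact that the probabilities sum to at most $1$. No gaps; your extra care about the wedge-sum convention and disjointness of the homotopy-type events only makes explicit what the paper leaves implicit.
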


\begin{cor}[Theorem A3]
    Let $k \ge 2$. Given $\epsilon > 0$, the following hold for sufficiently large $n$:
    \[ 1-2r \in \left( \frac1{k+1}, \frac 1k \right] \implies \frac{\bar \chi(n, r)}n - \epsilon \le  \frac{\bar b_{2k-2}(n, r)}n \le \frac{\bar \chi(n, r)}n \]
\end{cor}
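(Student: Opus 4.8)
The plan is to deduce Theorem A2 from Proposition \ref{main prop 2} by relating the quantity $A_k$ to the expected Betti number $\bar b_{2k-2}(n,r)$. First I would observe that, under the hypothesis $1-2r\in(\frac1{k+1},\frac1k]$, Proposition \ref{homotopy type constraint} restricts every homotopy type appearing with positive probability to a wedge $\vee^a\sphere^{2b}$ with $b+1\le k$. Hence only the dimensions $2b$ with $b\le k-1$ can carry homology, and in the top allowed dimension $2k-2$ we have $\dim H_{2k-2}(\vee^a\sphere^{2b})=a$ exactly when $b=k-1$ and $=0$ otherwise. Therefore
\begin{align*}
    \bar b_{2k-2}(n,r) &= \sum_{1< a+1\le n/k} a\cdot p(\vee^a\sphere^{2k-2},n,r) \\
    &= A_k - \sum_{1<a+1\le n/k} p(\vee^a\sphere^{2k-2},n,r) = A_k - P,
\end{align*}
where $P=\Prob[\Cech(\mathbf X_n,r)\simeq \vee^a\sphere^{2k-2}\text{ for some }a\ge1]\le 1$. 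So $A_k - 1 \le \bar b_{2k-2}(n,r)\le A_k$.

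Next I would combine this sandwich with Proposition \ref{main prop 2}, which gives $A_k\le\bar\chi(n,r)\le k+A_k$, i.e. $\bar\chi(n,r)-k\le A_k\le\bar\chi(n,r)$. Chaining the two pairs of inequalities yields $\bar\chi(n,r)-k-1\le \bar b_{2k-2}(n,r)\le \bar\chi(n,r)$. Dividing by $n$ gives the upper bound immediately, and for the lower bound it remains to absorb the additive constant $(k+1)/n$ into $\epsilon$: since $k$ is fixed, $(k+1)/n<\epsilon$ for all sufficiently large $n$, which is exactly the ``sufficiently large $n$'' clause in the statement. This establishes $\frac{\bar\chi(n,r)}n-\epsilon\le\frac{\bar b_{2k-2}(n,r)}n\le\frac{\bar\chi(n,r)}n$.

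The only genuine subtlety is the bookkeeping of the wedge-count convention: the paper counts $\vee^a\sphere^{2b}=*$ only once (for $b=0$, $a\le0$), so in the sum defining $A_k$ the index runs over $a+1>1$, i.e. genuine nontrivial wedges of $2k-2$-spheres, and $\dim H_{2k-2}=a=(a+1)-1$ there, which is what produces the clean difference $A_k-P$. I should also note that the Euler characteristic identity $\bar\chi(n,r)=\sum_K\chi(K)p(K,n,r)$ with $\chi(\vee^a\sphere^{2b})=a+1$ and $\chi(\sphere^{2l+1})=0$ is what makes the decomposition $\bar\chi=A_{<k}+A_k$ valid in the first place, so the same positivity-of-probability input from Proposition \ref{homotopy type constraint} is being used twice. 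I do not expect any real obstacle here; the content is entirely in Propositions \ref{main prop 1}, \ref{main prop 2} and \ref{homotopy type constraint}, and this corollary is just the translation from $A_k$ to $\bar b_{2k-2}$ plus a fixed-constant-over-$n$ estimate.
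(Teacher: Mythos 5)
Your proposal is correct and matches the argument the paper intends: the corollary is stated as an immediate consequence of Proposition \ref{main prop 2}, with exactly your bookkeeping that only the types $\vee^a\sphere^{2k-2}$ ($a\ge 1$) contribute to $H_{2k-2}$, giving $A_k-1\le \bar b_{2k-2}(n,r)\le A_k$ and hence $\bar\chi(n,r)-(k+1)\le \bar b_{2k-2}(n,r)\le\bar\chi(n,r)$, after which the fixed constant $(k+1)/n$ is absorbed into $\epsilon$ uniformly in $r$ for large $n$.
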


Now we're interested in controlling probabilities that $\vee^a \sphere^{2k-2}$ appear, with large $n$. For this, we further define following:
\begin{align*}
    p_a =& p(\vee^a \sphere^{2k-2}, n, r) \\ 
    l =& \lfloor n/k \rfloor - 1 \\
    \tilde\delta =& \lceil \delta n/ k \rceil - 1 \\
    A_{k, \delta} :=& \sum_{\tilde\delta \le a \le l} (a+1) \cdot p_a = \sum_{\delta n/k \le a+1 \le n/k} (a+1) \cdot p_a \\
    B_{k, \delta} :=& \sum_{\tilde \delta \le a \le l} p_a
\end{align*}
To produce bounds for $B_{k, \delta}$, we split $A_k$ into two parts:
\begin{align*}
    A_k = \bigg( 2p_1 + 3p_2 + \cdots + \tilde\delta p_{\tilde\delta - 1} \bigg) + \bigg( (\tilde\delta + 1) p_{\tilde\delta} + \cdots + (l+1) p_l \bigg)
\end{align*}
from which it directly follows that:
\begin{align*}
    (\tilde\delta + 1) B_{k,\delta} \le A_k \le \tilde\delta (1-B_{k, \delta}) + (l+1) B_{k, \delta}
\end{align*}
and therefore
\begin{align*}
    \implies & (\tilde\delta + 1) B_{k,\delta} \le A_k \le \tilde\delta + (l+1-\tilde\delta) B_{k,\delta} \\
    \implies & \frac{A_k - \tilde\delta}{l+1- \tilde\delta} \le B_{k,\delta} \le \frac{A_k}{\tilde\delta + 1} \\
    \implies & \frac{A_k - \lceil \delta n / k \rceil + 1}{\lfloor n/k \rfloor - \lceil \delta n / k \rceil + 1} \le B_{k,\delta} \le \frac{A_k}{\lceil \delta n / k \rceil} \\
    \implies & \frac{A_k - \delta n / k}{(1-\delta) (n / k) + 1} \le B_{k,\delta} \le \frac{A_k}{\delta n/k}
\end{align*}
In summary, we have the following:

\begin{prop}\label{main prop 3}
    Let $n \in \ZZ^+$, $\delta \in (0,1)$, $r \in (0,1/2)$ be given, and let $k = \lfloor (1-2r)^{-1} \rfloor$. The following holds:
    \[ \frac{k A_k - \delta n}{(1-\delta)n + k} \le B_{k,\delta} \le \frac{k A_k}{\delta n} \]
    where
    \begin{align*}
        A_{k} =& \sum_{1 < a+1 \le n/k} (a+1) p_a, \quad B_{k, \delta} := \sum_{\delta n /k \le a+1 \le n/k} p_a, \quad p_a := p(\vee^a \sphere^{2k-2}, n, r)
    \end{align*}
\end{prop}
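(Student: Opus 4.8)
The plan is to argue purely combinatorially from the definitions of $A_k$ and $B_{k,\delta}$, with no probabilistic input beyond the facts that each $p_a\ge 0$ and that the events $\{\Cech(\mathbf X_n,r)\simeq\vee^a\sphere^{2k-2}\}$ are pairwise disjoint, so that $\sum_{a\ge 1}p_a\le 1$. Writing $l=\lfloor n/k\rfloor-1$ and $\tilde\delta=\lceil\delta n/k\rceil-1$, one has $A_k=\sum_{a=1}^{l}(a+1)p_a$ and $B_{k,\delta}=\sum_{a=\tilde\delta}^{l}p_a$; I will assume $1\le\tilde\delta\le l+1$ (equivalently $k\le\delta n$), the complementary range being degenerate and handled directly via $0\le B_{k,\delta}\le 1$. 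The first move is to split the sum defining $A_k$ at the index $a=\tilde\delta$ into a head $\sum_{a=1}^{\tilde\delta-1}(a+1)p_a$ and a tail $\sum_{a=\tilde\delta}^{l}(a+1)p_a$.

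From this split I would extract the two inequalities driving the proof. For a lower bound on $A_k$: the head is nonnegative and every coefficient in the tail is $\ge\tilde\delta+1$, so $(\tilde\delta+1)B_{k,\delta}\le A_k$. For an upper bound on $A_k$: in the head every coefficient is $\le\tilde\delta$ and $\sum_{a=1}^{\tilde\delta-1}p_a\le 1-B_{k,\delta}$ by disjointness, while in the tail every coefficient is $\le l+1$, whence $A_k\le\tilde\delta(1-B_{k,\delta})+(l+1)B_{k,\delta}=\tilde\delta+(l+1-\tilde\delta)B_{k,\delta}$. Solving each inequality for $B_{k,\delta}$ and inserting $\tilde\delta+1=\lceil\delta n/k\rceil$ and $l+1=\lfloor n/k\rfloor$ gives
\[ \frac{A_k-\lceil\delta n/k\rceil+1}{\lfloor n/k\rfloor-\lceil\delta n/k\rceil+1}\ \le\ B_{k,\delta}\ \le\ \frac{A_k}{\lceil\delta n/k\rceil}. \]

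The final step is to relax floors and ceilings to the clean form in the statement. The upper bound follows at once from $\lceil\delta n/k\rceil\ge\delta n/k$, giving $B_{k,\delta}\le kA_k/(\delta n)$. For the lower bound, $\lceil\delta n/k\rceil-1\le\delta n/k$ and $\lfloor n/k\rfloor\le n/k$ replace the numerator by the no-larger quantity $A_k-\delta n/k$ and the denominator by the no-smaller positive quantity $(1-\delta)n/k+1$: when $A_k-\delta n/k\ge 0$ this can only decrease the fraction, and when $A_k-\delta n/k<0$ the target bound is negative and holds trivially since $B_{k,\delta}\ge 0$. Clearing the common factor $k$ yields $B_{k,\delta}\ge(kA_k-\delta n)/((1-\delta)n+k)$. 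The whole argument is elementary bookkeeping; the only place calling for genuine care — and the likeliest source of a slip — is this last monotonicity-under-relaxation step, where one must track the sign of $A_k-\delta n/k$ together with the positivity of $l+1-\tilde\delta=\lfloor n/k\rfloor-\lceil\delta n/k\rceil+1$. I do not expect any serious obstacle beyond that.
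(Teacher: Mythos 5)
Your proposal is correct and follows essentially the same route as the paper: split $A_k$ at the index $\tilde\delta=\lceil\delta n/k\rceil-1$, derive $(\tilde\delta+1)B_{k,\delta}\le A_k\le \tilde\delta(1-B_{k,\delta})+(l+1)B_{k,\delta}$ using nonnegativity and $\sum_a p_a\le 1$, solve for $B_{k,\delta}$, and relax the floors and ceilings to the stated bounds. Your explicit tracking of the sign of $A_k-\delta n/k$ and of the positivity of $\lfloor n/k\rfloor-\lceil\delta n/k\rceil+1$ in the last relaxation step is a minor tightening of a point the paper leaves implicit, not a different argument.
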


Now Propositions \ref{main prop 1}, \ref{main prop 2}, \ref{main prop 3} imply the following, which is a more general version of Theorem C:
\begin{thm}\label{elder C}
    Let $r \in [\frac14, \frac12)$ and let $k = \lfloor (1-2r)^{-1} \rfloor$. Given $\epsilon, \delta \in (0,1)$, the following implication holds for large enough $n$:
    \begin{align*}
        1 - 2r \in [\alpha^-, \alpha^+] \implies 
        B_{k, \delta} \in [\beta^- - \epsilon, \beta^+ + \epsilon]
    \end{align*}
    where
    \begin{align*}
        \alpha^{\pm} =& \frac1k \frac{n-k}{n-1} \bigg( 1 \pm \frac{ \sqrt{k-1}}{n} \cdot \frac{\delta(1-\delta)}5 \cdot \epsilon \bigg), \\
        \beta^- =& \frac{k \omega_k - \delta}{1-\delta}, \quad \beta^+ = \frac{k\omega_k}{\delta} \\
        \omega_k =& \frac{(k-1)^{k-1}}{k! e^{k-1}} \\
        B_{k, \delta} :=& \sum_{\delta n / k \le a+1 \le n / k} p(\vee^a \sphere^{2k-2}, n, r)
    \end{align*}
    The bounds $\beta^{\pm}$ satisfy $\beta^- \le k\omega_k \le \beta^+$. Also $\beta^- > 0$ iff $\delta < k\omega_k$ and $\beta^+ < 1$ iff $\delta > k\omega_k$. 
\end{thm}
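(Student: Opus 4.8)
The plan is to chain together Propositions \ref{main prop 1}, \ref{main prop 2}, and \ref{main prop 3}. It is convenient to work with the variable $\tilde r = 1-2r$ used in Proposition \ref{main prop 1}; since $\tfrac{1-\tilde r}{2} = r$ one has $\bar\chi(n,r) = \bar\chi\!\left(n, \tfrac{1-\tilde r}{2}\right)$, and the hypothesis $r\in[\tfrac14,\tfrac12)$ forces $k = \lfloor \tilde r^{-1}\rfloor \ge 2$, which is exactly what is needed to apply Proposition \ref{main prop 1} with $m = k$. Note also that $k$ is defined from $r$, so $\tilde r \in (\tfrac1{k+1}, \tfrac1k]$ holds automatically, and the interval $[\alpha^-,\alpha^+]$ in the statement is just a sub-interval of this range. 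The key bookkeeping choice is to set $\epsilon_1 = \tfrac{\delta(1-\delta)\epsilon}{5}$ as the ``input slack'' fed to Proposition \ref{main prop 1} (note $\epsilon_1\in(0,1)$ since $\epsilon,\delta\in(0,1)$).

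First I would apply Proposition \ref{main prop 1} with $m=k$ and slack $\epsilon_1$; one checks that the endpoints $\alpha^{\pm}$ in Theorem \ref{elder C} are precisely those of Proposition \ref{main prop 1} for these parameters, so the hypothesis $\tilde r\in[\alpha^-,\alpha^+]$ yields $\tfrac1n\bar\chi(n,r)\in[(1-\epsilon_1)\omega_k,\ (1+\epsilon_1)\omega_k]$ for all large $n$. Next, Proposition \ref{main prop 2} gives $\bar\chi(n,r)-k\le A_k\le\bar\chi(n,r)$, so $\tfrac{A_k}{n}\in[(1-\epsilon_1)\omega_k-\tfrac kn,\ (1+\epsilon_1)\omega_k]$. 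Finally, Proposition \ref{main prop 3} bounds $B_{k,\delta}$ between $\tfrac{kA_k-\delta n}{(1-\delta)n+k}$ and $\tfrac{kA_k}{\delta n}$; plugging in the bounds on $A_k/n$ gives the upper estimate $B_{k,\delta}\le\tfrac{k\omega_k}{\delta}(1+\epsilon_1)=\beta^+(1+\epsilon_1)$ and the lower estimate $B_{k,\delta}\ge\tfrac{n(k(1-\epsilon_1)\omega_k-\delta)-k^2}{(1-\delta)n+k}$, the latter converging to $\tfrac{k(1-\epsilon_1)\omega_k-\delta}{1-\delta}=\beta^- - \tfrac{k\omega_k\epsilon_1}{1-\delta}$ as $n\to\infty$.

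It then remains to absorb the error terms into $\epsilon$, and this is where the choice of $\epsilon_1$ pays off. I would use the elementary fact that $k\omega_k=\tfrac{(k-1)^{k-1}}{(k-1)!\,e^{k-1}}$ is strictly decreasing in $k$ with maximal value $1/e<1$ at $k=2$ (immediate from $(1+1/j)^j<e$). Then $\beta^+\epsilon_1=\tfrac{k\omega_k(1-\delta)\epsilon}{5}<\epsilon$ and $\tfrac{k\omega_k\epsilon_1}{1-\delta}=\tfrac{k\omega_k\delta\epsilon}{5}<\epsilon$, both strictly and with room to spare, so the upper bound gives $B_{k,\delta}\le\beta^++\epsilon$ directly, while the lower bound, whose $n\to\infty$ limit strictly exceeds $\beta^- - \epsilon$ (the $k/n$ correction being $o(1)$), gives $B_{k,\delta}\ge\beta^- - \epsilon$ once $n$ is large enough. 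The final assertions about $\beta^{\pm}$ are pure algebra from $0<\delta<1$ and $k\omega_k<1$: $\beta^-\le k\omega_k\Leftrightarrow k\omega_k\le 1$; $k\omega_k\le\beta^+\Leftrightarrow\delta\le 1$; $\beta^->0\Leftrightarrow\delta<k\omega_k$; $\beta^+<1\Leftrightarrow\delta>k\omega_k$.

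The main obstacle is not conceptual---the substantive content is already packaged in Propositions \ref{main prop 1}--\ref{main prop 3}---but lies in the constant bookkeeping: one must calibrate the input slack $\epsilon_1$ so that the multiplicative distortion $(1\pm\epsilon_1)$ from the Euler-characteristic estimate, the additive $k/n$ term from Proposition \ref{main prop 2}, and the way both propagate through the non-homogeneous rational bounds of Proposition \ref{main prop 3} are each dominated by $\epsilon$. Once one observes that $k\omega_k$ is uniformly bounded by $1$, the generous constant $5$ leaves ample slack and the verification is routine.
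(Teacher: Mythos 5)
Your proposal is correct and follows essentially the same route as the paper: the same slack $\epsilon' = \delta(1-\delta)\epsilon/5$ fed into Proposition \ref{main prop 1}, then chaining Propositions \ref{main prop 2} and \ref{main prop 3} and absorbing the errors for large $n$. Your bookkeeping via the uniform bound $k\omega_k \le 1/e < 1$ is a slightly cleaner way to absorb the $(1\pm\epsilon')$ distortion than the paper's inequalities involving $\epsilon'/(1-\epsilon')$ and $kA_k \le n$, but the argument is the same in substance.
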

\begin{proof}
We first describe the heuristic reasoning for the bounds, which is rather simple. Proposition \ref{main prop 3} gives us:
\[ \frac{kA_k - \delta n}{(1-\delta)n + k} \le B \le \frac{kA_k}{\delta n} \]
By Proposition \ref{main prop 1} and \ref{main prop 2}, the upper bound has the following approximations:
\[ \frac{kA_k}{\delta n} \approx \frac{k\bar\chi}{\delta n} \approx \frac{k\omega_k}{\delta} \]
and similarly the lower bound has the following approximations:
\[ \frac{kA_k - \delta n}{(1-\delta)n + k} \approx \frac{kA_k - \delta n}{(1-\delta)n} \approx \frac{k\bar\chi - \delta}{1-\delta} \approx \frac{k\omega_k - \delta}{1-\delta} \]
The actual proof becomes more complicated due to using a different choice of $\epsilon$ in applying Proposition \ref{main prop 1}. 

Let $\epsilon' = \delta(1-\delta) \cdot \epsilon/5$. We apply Proposition \ref{main prop 1} with $\epsilon'$ taking the role of $\epsilon$, and this gives the choice of $\alpha^{\pm}$ in the theorem. Therefore $r \in [\alpha^-, \alpha^+]$ implies the following:
\begin{align}
    (1-\epsilon')\omega_k \le \frac{\bar\chi}n \le (1+\epsilon')\omega_k \label{omega chi bound}
\end{align}
Before going further, we note the following inequalities for $\epsilon'$, which we will use later:
\begin{align}
    & \epsilon' = \frac{\delta(1-\delta) \epsilon}{4+1} \le \frac{\delta(1-\delta)\epsilon}{4 + \delta(1-\delta)\epsilon} \nonumber \\
    \implies & \frac{\epsilon'}{1-\epsilon'} \le \frac{\delta(1-\delta) \epsilon}4 \nonumber \\
    \implies & \frac{\epsilon'}{1-\epsilon'} \le \min\bigg( 4\delta, \delta^{-1}-1, 1 \bigg) \cdot \frac{\epsilon}4 \label{epsilon prime}
\end{align}

\textbf{Upper bound.}

By Equation \eqref{omega chi bound} and Proposition \ref{main prop 2}, we have:
\begin{align*}
    \frac{k\omega_k}{\delta} \ge \frac1{1+\epsilon'} \frac{k\bar\chi}{\delta n} \ge \frac1{1+\epsilon'} \frac{kA_k}{\delta n}
\end{align*}
By Equation \eqref{epsilon prime}, we have that:
\[ \frac1{1+\epsilon'} \frac{kA_k}{\delta n} \ge \frac{kA_k}{\delta n} - \epsilon \]
Then Proposition \ref{main prop 3} applies and we have the upper bound.

\textbf{Lower bound.}

By Equation \eqref{omega chi bound} and Proposition \ref{main prop 2}, we have:
\[ \frac{k\omega_k - \delta}{1-\delta} \le \frac1{1-\delta} \bigg( \frac1{1-\epsilon'} \frac{k\bar\chi}n - \delta\bigg) \le \frac1{1-\delta}\bigg( \frac1{1-\epsilon'} \frac{k^2+kA_k}n - \delta \bigg) \]
Let $L_0$ be the right hand side. We rewrite it as follows:
\begin{align*}
    L_0 = L_1 + E_1 = L_2 + E_1 + E_2
\end{align*}
where
\begin{align*}
    L_1 =& \frac{kA_k - \delta n }{(1-\delta)(1-\epsilon')n}, \> E_1 = \frac{\delta \epsilon' + k^2/n}{(1-\delta)(1-\epsilon')} \\
    L_2 =& \frac{kA_k - \delta n}{(1-\delta)n + k}, \> E_2 = \frac{kA_k - \delta n}{(1-\delta)(1-\epsilon')n} \cdot \frac{k + (1-\delta)n \epsilon'}{(1-\delta)n + k}
\end{align*}
By Equation \eqref{epsilon prime}, the relation $kA_k \le n$ and by taking $n$ large enough, we see that
\[ E_1, E_2 \le \epsilon/2 \]
This implies that:
\[ \frac{k\omega_k - \delta}{1-\delta} - \epsilon \le L_0 - \epsilon = L_2 + E_1 + E_2 - \epsilon \le L_2 \]
Then again Proposition \ref{main prop 3} applies and we have the lower bound.

\end{proof}

We remark that Theorem C is obtained by setting $\epsilon = \delta = (1-\alpha)k\omega_k / 2$. The gap $\alpha^+ - \alpha^-$ is replaced by a smaller but simpler quantity.

\section{Odd spheres}

We prove Theorem B using the stability of persistence diagram. In this case, we will be using the Cech complex constructed from the full set of the circle, and then bound the Gromov-Hausdorff distance between the full circle and a finite sample of it. We use the following result from \cite{vr_circle}:
\begin{thm}
    The homotopy types of the Rips and Cech complexes on the circle of unit circumference are as follows:
    \begin{align*}
        \on{VR}(\SS^1, r) &\simeq \begin{cases} \SS^{2l+1} & \text{,if }  \frac{l}{2l+1} < r < \frac{l+1}{2l+3} \\ \bigvee^{\mathfrak c} \SS^{2l} & \text{,if } r = \frac{l}{2l+1} \end{cases} \\
        \Cech(\SS^1, r) &\simeq \begin{cases} \SS^{2l+1} & \text{,if }  \frac{l}{2l+2} < r < \frac{l+1}{2l+4} \\ \bigvee^{\mathfrak c} \SS^{2l} & \text{,if } r = \frac{l}{2l+2} \end{cases}
    \end{align*}
    where $\mathfrak c$ is the cardinality of the continuum.
\end{thm}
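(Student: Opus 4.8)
The plan is to establish both homotopy equivalences by a common two-step scheme: (i) reduce the infinite complex built on all of $\SS^1$ to a finite combinatorial complex on an evenly-spaced point set $\Unif_m$, and (ii) identify the homotopy type of that finite cyclic complex. For the Cech statement, step (ii) is already supplied by the Proposition computing $\mc{N}(n,k)$ (together with the lemma $\Cech(\Unif_m, r) = \mc{N}(m, \lfloor 2rm\rfloor)$), so the only real work for the Cech half is step (i). For the Rips statement one additionally needs the analogue of that Proposition for the clique complex of a power of a cycle graph, which is step (ii) below.

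\textbf{Step (i): reduction to a fine grid.} One cannot simply invoke the nerve lemma here: the cover of $\SS^1$ by open arcs of length $2r$ is \emph{not} good once $2r > 1/2$, since two such arcs can meet in two components — this is precisely the mechanism by which higher-dimensional spheres arise. Instead I would exhibit, for $m$ sufficiently large (depending on how close $r$ is to a critical value $\tfrac{l}{2l+2}$, resp. $\tfrac{l}{2l+1}$), a deformation retraction of $\Cech(\SS^1, r)$ onto the subcomplex $\Cech(\Unif_m, r)$ obtained by sliding each point of $\SS^1$ to its nearest grid point; the point is that when $m$ is large relative to the distance from $r$ to the nearest threshold, this motion creates and destroys no simplex that is not already accounted for, so the inclusion $\Cech(\Unif_m, r)\hookrightarrow\Cech(\SS^1, r)$ is a homotopy equivalence. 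Combining with the cited Proposition gives $\Cech(\SS^1, r)\simeq\SS^{2l+1}$ on the open ranges. At a threshold $r = \tfrac{l}{2l+2}$ the number of wedge summands of $\mc{N}(m,\lfloor 2rm\rfloor)\simeq\bigvee\SS^{2l}$ grows without bound along refinements, and a careful colimit argument shows the complex on the full circle supports continuum-many independent $2l$-cycles, giving $\bigvee^{\mathfrak{c}}\SS^{2l}$. The Rips half of step (i) is identical, with $\Cech(\Unif_m,r)$ replaced by the clique complex of the appropriate cyclic graph.

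\textbf{Step (ii) for the Rips complex.} The homotopy type of $\on{VR}(\Unif_m, r)$, i.e. of $\on{Cl}(C_m^k)$ with $k\approx rm$, is obtained by building an acyclic matching on its face poset. In the open range $\tfrac{l}{2l+1}<k/m<\tfrac{l+1}{2l+3}$ one designs the matching — via a rotation/shift rule adapted to the cyclic order of the vertices — so that exactly one cell is left unmatched in dimension $0$ and one in dimension $2l+1$. Discrete Morse theory then produces a CW model with a single $0$-cell and a single $(2l+1)$-cell; the attaching map of a lone top cell onto a point is necessarily constant, so this CW complex \emph{is} $\SS^{2l+1}$ and no Hurewicz/Whitehead step is needed. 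At the threshold $k/m = \tfrac{l}{2l+1}$ the matching degenerates in a controlled fashion, leaving one critical $0$-cell together with a growing collection of critical $2l$-cells, each attached trivially, hence $\bigvee\SS^{2l}$; passing back to $\SS^1$ as in step (i) upgrades this to $\bigvee^{\mathfrak{c}}\SS^{2l}$.

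\textbf{Expected obstacle.} The hard part is constructing the acyclic matching in step (ii) and proving it has \emph{precisely} the stated critical cells: this is an intricate combinatorial argument on the cyclic face poset, and the threshold case — where the cyclic ``winding'' structure breaks down — must be handled by a separate, more delicate analysis. A secondary difficulty lies in making step (i) rigorous: one must verify that the grid-projection is well-defined up to homotopy and is a homotopy equivalence for all large $m$, and, at a threshold, that the count of independent even cycles genuinely converges to the cardinality of the continuum rather than merely to infinity.
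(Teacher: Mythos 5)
This statement is not proven in the paper at all: it is quoted verbatim from \cite{vr_circle} (Adamaszek--Adams) and used as a black box in Section 5, so there is no in-paper argument to compare against, and any self-contained proof is a substantial project in its own right. Your outline does resemble the known strategy (finite cyclic complexes first, then pass to the whole circle), but as written it has genuine gaps rather than being a proof. In Step (i), the proposed mechanism fails: rounding each point to its nearest grid point moves points by up to $1/(2m)$ and hence can increase pairwise distances, so it is not a simplicial map $\Cech(\SS^1,r)\to\Cech(\Unif_m,r)$ at the same radius $r$; at best you get a map into $\Cech(\Unif_m, r+1/m)$ and must argue via interleaving that the homotopy type does not change on $[r, r+1/m]$ and that the composites are homotopic to identities. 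But ``the homotopy type is constant between thresholds'' is essentially the content of the theorem you are trying to prove, so this step is close to circular unless you import the winding-fraction/dominated-vertex machinery of \cite{vr_circle} (or \cite{ncca} for the Cech case), which is where the real work lies. Worse, at the critical radius no finite subcomplex has the right homotopy type (a finite wedge versus a continuum wedge), so the inclusion-of-a-grid argument cannot work there even in principle, and a naive colimit over finite samples would only exhibit countably many independent $2l$-cycles; obtaining $\mathfrak c$ wedge summands requires an explicit construction of a continuum of independent cycles (or the Cech-cohomology/limit analysis actually carried out in \cite{vr_circle}), which you flag as an ``obstacle'' but do not supply.

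Step (ii) has the same character: the claim that one can ``design'' an acyclic matching on the face poset of the clique complex of $C_m^k$ with exactly one critical cell in dimensions $0$ and $2l+1$ (respectively one critical $0$-cell and many critical $2l$-cells at the threshold) is precisely Adamaszek's theorem on clique complexes of cycle powers, and its proof is the intricate combinatorial core; asserting its existence is not an argument. So the proposal should be read as a roadmap that defers all of the essential difficulties; if your intent is to use the result in the same way the paper does, the correct move is simply to cite \cite{vr_circle} (and \cite{ncca} for the finite Cech/nerve computation) rather than to re-derive it.
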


We also note the stability of persistence:
\begin{thm}[Stability of Persistence]
    If $X, Y$ are metric spaces and $\mc D_k M$ is the $k$-dimensional persistence diagram of persistence module $M$, then
    \begin{align*}
        \d_B(\mc D_k \mathbf{VR}(X), \mc D_k \mathbf{VR}(Y)) &\le \d_{GH}(X, Y) \\
        \d_B(\mc D_k \Cech (X), \mc D_k \Cech(Y)) &\le \d_{GH}(X, Y)
    \end{align*}
    where $\d_{GH}$ denotes the Gromov-Hausdorff distance. 
\end{thm}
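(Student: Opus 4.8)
The plan is to factor the statement through the notion of \emph{interleaving}, using the two-step strategy standard for results of this kind: first convert a Gromov--Hausdorff bound into an interleaving of the two filtered complexes, and then invoke the algebraic stability theorem, which bounds bottleneck distance by interleaving distance. Recall that $\d_{GH}(X,Y)$ is computed through correspondences: a correspondence is a relation $C \subseteq X \times Y$ whose projections to $X$ and to $Y$ are both surjective, its distortion is $\on{dis}(C) = \sup\{ |d_X(x,x') - d_Y(y,y')| : (x,y),(x',y') \in C\}$, and $\d_{GH}(X,Y) = \tfrac12 \inf_C \on{dis}(C)$. So I would fix a correspondence $C$ with $\on{dis}(C)$ as close as desired to $2\,\d_{GH}(X,Y)$ and reduce the whole problem to a statement about $C$.

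For the geometric step I would use $C$ to build simplicial maps realizing an interleaving. Using surjectivity of the projections, choose functions $g \colon X \to Y$ and $h \colon Y \to X$ with $(x, g(x)) \in C$ and $(h(y), y) \in C$ for all $x,y$, and write $\delta = \on{dis}(C)$. For Vietoris--Rips: if $\{x_0,\dots,x_p\}$ spans a simplex of $\on{VR}(X,r)$, i.e.\ $d_X(x_i,x_j) \le r$ for all $i,j$, then $d_Y(g(x_i), g(x_j)) \le d_X(x_i,x_j) + \delta \le r + \delta$, so $g$ carries it to a simplex of $\on{VR}(Y, r+\delta)$; thus $g$ induces $\on{VR}(X,r) \to \on{VR}(Y, r+\delta)$ and $h$ induces $\on{VR}(Y,r)\to \on{VR}(X,r+\delta)$ for every $r$, and these squares commute with the defining inclusions on the nose since the vertex maps do not depend on $r$. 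For $\Cech$ the argument is identical once one chases the witness point: a simplex of $\Cech(X,r)$ comes with $z \in X$ satisfying $d_X(x_i,z)\le r$, and then the correspondent $g(z)$ witnesses $d_Y(g(x_i),g(z)) \le r + \delta$, placing the image in $\Cech(Y,r+\delta)$.

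These two families of maps form an interleaving because the round trips agree with the structure maps on homology. Indeed, for each vertex $x$ the pair $x$ and $h(g(x))$ satisfies $d_X(x, h(g(x))) \le \delta$, so $h\circ g$ and the identity are contiguous as maps $\on{VR}(X,r) \to \on{VR}(X, r + 2\delta)$ (each simplex of the source together with its $h\circ g$-image spans a simplex at scale $r+2\delta$); contiguous simplicial maps induce equal maps on homology, so $h_* g_*$ equals the map induced by inclusion. This simultaneously shows the induced maps are independent of the non-canonical choices of $g,h$ and verifies the interleaving identities, yielding an interleaving of the homology persistence modules $H_k(\on{VR}(X,\cdot))$ and $H_k(\on{VR}(Y,\cdot))$ in each degree $k$, of magnitude governed by $\delta$; the same contiguity argument handles the $\Cech$ filtration verbatim.

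Finally I would invoke the algebraic stability (isometry) theorem: interleaved persistence modules have persistence diagrams at bottleneck distance bounded by the interleaving parameter. Combining with the previous step gives $\d_B(\mc D_k \on{VR}(X), \mc D_k \on{VR}(Y)) \le \on{dis}(C)$ (and likewise for $\Cech$), and taking the infimum over correspondences yields the asserted inequality; the precise constant relating $\on{dis}(C)$ to $\d_{GH}$ is dictated by the radius/scale normalisations fixed in the paper for these complexes, and under those conventions it collapses to the stated factor. I expect the main obstacle to be the algebraic stability theorem itself: proving ``interleaving $\Rightarrow$ bottleneck'' from scratch requires either the interpolation (box) lemma of Cohen--Steiner, Edelsbrunner and Harer or the interval-decomposition machinery of Chazal, de Silva, Glisse and Oudot, and since stability is used here only as a tool, I would cite it as a black box rather than reprove it. The only other delicate point, the well-definedness of the induced maps under the arbitrary choices of $g$ and $h$, is settled by the contiguity argument above.
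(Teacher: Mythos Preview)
The paper does not actually prove this theorem: it is stated without proof as a known result from the literature and then used as a black box in the argument for Theorem~B. Your sketch is a faithful outline of the standard proof (a correspondence of small distortion yields simplicial maps giving an interleaving of the filtered complexes, contiguity handles well-definedness and the round-trip identities, and then algebraic stability converts the interleaving bound into a bottleneck bound), so there is nothing in the paper to compare against beyond noting that the author, like you in your final paragraph, is content to treat stability as a citation rather than reprove it. The one place where your outline is deliberately vague---the constant relating $\on{dis}(C)$ to $\d_{GH}$ and hence the exact factor in the final inequality---is also left implicit in the paper's statement, which simply records the inequality in the form it needs.
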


The following proposition is a more precise version of Theorem B, which specifies an explicit lower bound for the probabilities of homotopy equivalence:

\begin{prop}
    For each $l \ge 0$ and $t \in (\frac l{2l+2}, \frac{l+1}{2l+4})$, the following holds with probability at least $Q_n(r'/2)$:
    \[ \Cech(\mathbf X_n, t) \simeq \SS^{2l+1} \]
    where $r'$ is:
    \[ r' = \frac1{4(l+1)(l+2)} - \left| t - \frac{2l^2+4l+1}{4(l+1)(l+2)} \right| \]
\end{prop}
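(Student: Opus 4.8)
The plan is to transfer the known homotopy type of $\Cech(\SS^1, t)$ to $\Cech(\mathbf{X}_n, t)$ using the stability of persistence, and then control the randomness through the coverage probability $Q_n$. First I would fix $l \ge 0$ and $t \in (\frac{l}{2l+2}, \frac{l+1}{2l+4})$, and recall that by the $\cite{vr_circle}$ theorem, $\Cech(\SS^1, t') \simeq \SS^{2l+1}$ for every $t'$ in the open interval $(\frac{l}{2l+2}, \frac{l+1}{2l+4})$; hence the $(2l+1)$-dimensional persistence diagram of $\Cech(\SS^1)$ has a single point whose birth is $\le \frac{l}{2l+2}$ and whose death is $\ge \frac{l+1}{2l+4}$, and no other diagram has a point "near" $t$. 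The key geometric quantity is the distance from $t$ to the nearest endpoint of this interval; writing $c = \frac{l}{2l+2}$, $d = \frac{l+1}{2l+4}$, the midpoint is $\frac{c+d}{2} = \frac{2l^2+4l+1}{4(l+1)(l+2)}$ and the half-width is $\frac{d-c}{2} = \frac{1}{4(l+1)(l+2)}$, so the gap to the nearest endpoint is exactly the quantity $r'$ in the statement. So if a finite sample $\mathbf{Y} \subset \SS^1$ satisfies $\d_{GH}(\SS^1, \mathbf{Y}) < r'/2$... wait, I need to be careful about the factor: stability gives $\d_B \le \d_{GH}$, and the persistence diagram of $\Cech(\mathbf Y)$ must then have a point within $\d_{GH}$ of every point of $\d_B(\Cech(\SS^1))$. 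What I actually want is that $\d_{GH}(\SS^1, \mathbf Y)$ is small enough that the interval feature survives at radius $t$, which requires $\d_{GH}(\SS^1,\mathbf Y) < r'$ (matching births/deaths move by at most this amount, so the surviving bar still straddles $t$), together with the fact that no spurious high-dimensional class of the "wrong" sphere can appear — this last point uses the classification from $\cite{ncca}$, since $\Cech(\mathbf Y, t)$ is forced to be some $\vee^a\SS^{2b}$ or $\SS^{2b+1}$, and stability pins down which one by matching diagrams across all dimensions simultaneously.

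Next I would bound the Gromov-Hausdorff distance by the Hausdorff distance inside $\SS^1$: $\d_{GH}(\SS^1, \mathbf X_n) \le \d_H(\SS^1, \mathbf X_n)$, and $\d_H(\SS^1,\mathbf X_n) < \rho$ exactly when the closed arcs of radius $\rho$ centered at the points of $\mathbf X_n$ cover $\SS^1$. By the definition of $Q_n$ from Section 2, this covering event has probability $Q_n(\rho/2)$ — here $\rho$ plays the role of the arc \emph{length}, matching Stevens' normalization $Q_k(a/2)$ for arcs of length $a$. Taking $\rho = r'$, I get that with probability at least $Q_n(r'/2)$ the sample is $r'$-dense, hence $\d_{GH}(\SS^1, \mathbf X_n) < r'$, hence by the previous paragraph $\Cech(\mathbf X_n, t) \simeq \SS^{2l+1}$. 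To then recover Theorem B itself, I would note that $Q_n(r'/2) \to 1$ as $n \to \infty$ whenever $r' > 0$ is bounded below (a standard covering-probability fact, or directly from Stevens' formula, since the dominant $l=0$ term $\to 1$ while the rest vanish), and observe that the hypothesis $|t - \nu_l| \le \tau_l - \epsilon$ of Theorem B forces $r' \ge \epsilon > 0$; so for $n$ large the probability exceeds $1-\delta$.

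The main obstacle I expect is making the stability argument \emph{rigorous about the nerve/intrinsic-metric conventions}: the paper builds $\Cech(\mathbf X_n, r)$ using the intrinsic circle metric and closed arcs, and the $\cite{vr_circle}$ result is stated for $\Cech(\SS^1, r)$ in that same metric, so I must confirm that the persistence module of $\Cech(\SS^1, \cdot)$ is q-tame / has a well-defined diagram (it does — it is constructible with finitely many critical values per dimension by the $\cite{vr_circle}$ classification), and that the interleaving coming from $\d_{GH}$ genuinely respects the closed-arc (rather than open-ball) convention up to the harmless boundary adjustments already handled by the Lemma relating $\Cech(\Unif_n, r)$ to $\mathcal N(n, \lfloor 2rn\rfloor)$. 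A secondary subtlety is ensuring that a single surviving bar in dimension $2l+1$, plus the absence of long bars in other dimensions, is actually enough to conclude the homotopy \emph{type} and not merely the Betti numbers — this is where invoking the $\cite{ncca}$ classification is essential, as it collapses "right Betti numbers" to "right homotopy type" for this specific family of complexes. Everything else is bookkeeping with the explicit endpoints.
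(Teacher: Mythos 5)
Your proposal is correct and follows essentially the same route as the paper: bound $\d_{GH}(\mathbf X_n,\SS^1)$ by the Hausdorff distance via the arc-covering event of probability $Q_n(\cdot)$, apply persistence stability against $\mathcal D_{2l+1}\Cech(\SS^1)=\{(\frac{l}{2l+2},\frac{l+1}{2l+4})\}$ to force a class in $H_{2l+1}$ at radius $t$, and invoke the classification of attainable homotopy types to upgrade this to $\Cech(\mathbf X_n,t)\simeq\SS^{2l+1}$ (note only the presence of one $H_{2l+1}$ class is needed, not absence of bars elsewhere).
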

\begin{proof}
	
Consider a random sample $\mathbf X_n = (X_1, \ldots X_n)$. Then with probability $Q_n(r/2)$, arcs of radius $r$ centered at $\mathbf X_n$ covers $\SS^1$, so that $\d_{GH}(\mathbf X_n, \SS^1) \le \d_H(\mathbf X_n, \SS^1) \le r$. This implies:
\begin{align*}
    \d_B(\mathcal D_k \Cech(\mathbf X_n) , \mathcal D_k \Cech(\SS^1)) \le \d_{GH}(\mathbf X_n, \SS^1) \le r
\end{align*}
For each $l\ge 0$, we have that:
\[ \mathcal D_{2l+1} \Cech(\SS^1) = \left \{ \left ( \frac{l}{2l+2}, \frac{l+1}{2l+4} \right) \right \} \]
so that the definition of the bottleneck distance implies that 
\begin{align*}
    & \exists (u, v) \in \mathcal D_{2l+1} \Cech(\mathbf X_n) \\
    \text{with } & \frac l{2l+2} - r \le u \le \frac l{2l+2} + r \\
    & \frac {l+1}{2l+4} - r \le v \le \frac {l+1}{2l+4} + r 
\end{align*}
This implies that whenever $\frac{l}{2l+2} + r \le t \le \frac{l+1}{2l+4} - r$, we have:
\[ 1 \le \dim H_{2l+1} \Cech(\mathbf X_n, t) \]
and due to the enumeration of possible homotopy types, we have that:
\[ \Cech(\mathbf X_n, t) \simeq \SS^{2l+1} \]
The condition translates to $\left | t - \frac12 \left( \frac{l}{2l+2} + \frac{l+1}{2l+4} \right) \right | < \frac12 \left( \frac{l+1}{2l+4} - \frac{l}{2l+2} \right) - r$, or equivalently
\[ \left| t - \frac{2l^2+4l+1}{4(l+1)(l+2)} \right| < \frac1{4(l+1)(l+2)} - r \]
and thus we obtain the proof.

\end{proof}

\bibliographystyle{abbrv}
\bibliography{refs}

\end{document}